\renewcommand\H{\mathrm{H}}
\renewcommand\L{\mathrm{L}}
\newcommand\W{\mathrm{W}}
\newcommand\R{\mathbb{R}}
\newcommand\bbH{\mathbb{H}}
\newcommand\bH{\mathbf{H}}
\newcommand\bL{\mathbf{L}}
\newcommand\cC{\mathcal{C}}
\newcommand\cD{\mathcal{D}}
\newcommand\cA{\mathcal{A}}
\newcommand\cT{\mathcal{T}}
\newcommand\cP{\mathcal{P}}
\newcommand\cV{\mathcal{V}}
\newcommand{\cF}{\mathcal{F}}
\renewcommand\t{\mathtt{t}}
\newcommand\bn{\boldsymbol{n}}
\newcommand\bF{\boldsymbol{f}}
\newcommand\bu{\boldsymbol{u}}
\newcommand\bv{\boldsymbol{v}}
\newcommand\br{\boldsymbol{r}}
\newcommand\bs{\boldsymbol{s}}
\newcommand\be{\boldsymbol{e}}
\newcommand\beps{\boldsymbol{\varepsilon}}
\newcommand\bsig{\boldsymbol{\sigma}}
\newcommand\btau{\boldsymbol{\tau}}
\newcommand\bze{\boldsymbol{\zeta}}
\newcommand\bet{\boldsymbol{\eta}}
\newcommand\bgam{\boldsymbol{\gamma}}
\newcommand\kq{\mathfrak{q}}
\newcommand\kp{\mathfrak{p}}
\newcommand\bdiv{\mathop{\mathbf{div}}\nolimits}
\newcommand{\norm}[1]{\left\| #1\right\|}
\newcommand\tr{\mathop{\mathrm{tr}}\nolimits}
\newcommand{\dual}[1]{\langle #1 \rangle}
\def\<#1>{\mathinner{\langle#1\rangle}}
\newcommand{\set}[1]{\lbrace #1 \rbrace}
\newcommand{\jump}[1]{\llbracket #1 \rrbracket}
\newcommand{\mean}[1]{\{#1\}}
\DeclareMathOperator*{\esssup}{ess\,sup}
\newtheorem{lemma}{Lemma}[section]
\newtheorem{theorem}{Theorem}[section]
\newtheorem{prop}{Proposition}[section]
\newtheorem{corollary}{Corollary}[section]
\numberwithin{equation}{section}
\numberwithin{figure}{section}
\numberwithin{table}{section}
\date{}
\title{Mixed-hybrid and mixed-discontinuous Galerkin methods for linear dynamical elastic-viscoelastic composite structures
\thanks{This research was  supported by Spain's Ministry of Economy Project MTM2017-87162-P.}}
\author{ {\sc Antonio M\'arquez}\thanks{Departamento de Construcci\'on e Ingenier\'\i a de Fabricaci\'on, Universidad de Oviedo, Oviedo, Espa\~na, e-mail: {\tt amarquez@uniovi.es}.}
\quad and \quad 
{\sc Salim Meddahi}\thanks{Departamento de Matem\'aticas, Facultad de Ciencias, Universidad de Oviedo, Calvo Sotelo s/n, Oviedo, Espa\~na, e-mail: {\tt salim@uniovi.es}.}}
\begin{document}

\maketitle

\begin{abstract}
\noindent
We introduce and analyze a stress-based formulation for Zener's model in linear viscoelasticity. The method is aimed to tackle efficiently  heterogeneous materials that admit purely elastic and viscoelastic parts in their composition. We write the mixed variational formulation of the problem in terms of a class of tensorial wave equation and obtain an energy estimate that guaranties the well-posedness of the problem through a standard Galerkin procedure. We propose and analyze mixed continuous and discontinuous Galerkin space discretizations of the problem and derive optimal error bounds for each semidiscrete solution in the corresponding energy norm. 
\end{abstract}
\medskip

\noindent
\textbf{Mathematics Subject Classification.} 65N30, 65M12, 65M15, 74H15
\medskip

\noindent
\textbf{Keywords.}
mixed finite elements, elastodynamics, error estimates

\section{Introduction}  

We aim to study the propagation of waves in linear composite structures constituted by elastic and viscoelastic materials placed in juxtaposition to each other. The typical viscoelastically damped structure is composed of core layers of viscoelastic components interspersed between layers of purely elastic materials. The behaviour of this kind of engineering media under dynamic loading is employed in a wide range of applications, cf. \cite{sayyad} and the references therein. For instance, the ability of laminated viscoelastic materials to absorb vibrational energy is often exploited to damp mechanical shocks or to attenuate resonant vibrations and control noise propagation.  

Viscoelastic materials exhibit time-dependent strain effects in response to applied loads. The stress-strain time relationship (see \eqref{oldConstitutive} below) proposed by Zener \cite{zener}  gives rise to  the simplest viscoelastic model that takes into account important features such as creep/recovery and stress relaxation phenomena \cite{salencon}. The parameter $\omega\geq 0$ in \eqref{oldConstitutive} is called the characteristic relaxation time, it measures the extent to which the model includes viscoelastic effects for the solid constituent, with $\omega = 0$ corresponding to the purely elastic case. The scope of this work is to propose a new mixed variational formulation in viscoelasticity that is able to  handle composites made up of elastic and viscoelastic components. This amounts to allow the relaxation time $\omega$  to be a piecewise constant function  that may  vanish identically in parts of the domain representing the structure. 

Displacement based formulations in viscoelasticity are generally written in terms of the hereditary integral form of the constitutive equation \eqref{oldConstitutive}. The resulting problem consists in a second-order hyperbolic partial differential equation with a fading memory Volterra integral. This classical approach is widely used in engineering and it has been the subject of many mathematical \cite{Morro, gurtin} and numerical \cite{Shaw1, riv1, Janovsky, Idesman} studies. This formulation is generally solved numerically by a Galerkin finite element discretization in space, followed by a finite difference method for the time derivatives and a quadrature rule for the Volterra integral. Such a time-stepping scheme requires to keep in memory solutions of past time levels, which may cause great demands for data storage. For this reason, some studies have opted for avoiding  time convolution by incorporating further (memory) variables to the formulation of problem, see \cite{Carcione, riv2}.

We are interested here in differential formulations of the viscoelastic problem based on  dual mixed formulations of the viscoelastic problem \cite{Becache, rognes, lee, gmm-2020}. This approach provides direct and  accurate approximations of the stress tensor, which is the quantity of primary interest in many applications. Moreover, the mixed formulations in elasticity are known to be free from volumetric locking in the nearly incompressible case. The approach requires to separate the stress tensor into two independent components representing  the elastic and viscoelastic contributions. Rognes and Winther shown in \cite{rognes} that it is possible to exploit well-known stable  families of finite elements for mixed approximations in elasticity \cite{abd, ArnoldFalkWinther,CGG,GG} to solve the quasi-static Maxwell and Kelvin-Voigt models in viscoelasticity.  This strategy has  been generalized by Lee \cite{lee} for the dynamic standard linear model. In contrast to the first order formulation in time considered in \cite{Becache, lee}, the recent article   \cite{gmm-2020} proposes to directly work with the equations in second order form, which permits to stabilise  the stress tensor in the full energy norm and obtain H(div)-norm convergence error estimates for this variable. 

None of the aforementioned methods considers the case of a constitutive law that reduces to Hooke's law in parts of the viscoelastic body. We are not aware about results in the literature analysing mixed formulations for composite elastic-viscoelastic structures. The purpose of this paper is to reformulate the mixed finite element method presented in \cite{gmm-2020} in order to efficiently address such materials. We propose two different space discretizations of the problem:
\begin{itemize}
 
	\item The first option is based on a continuous Galerkin (CG) method. Unlike \cite{rognes, lee, gmm-2020} where the discrete counterpart of the elastic and viscoelastic components of the stress tensor are both sought in H(div), we release here each of these components from any continuity requirement and only ask their sum (which represents the real stress variable) to be H(div)-conforming. We use standard energy techniques and the well-known  stability properties of the Arnold-Falk-Whinter (AFW) finite element \cite{ArnoldFalkWinther} of order $k\geq 1$  to carry out the convergence analysis of this semi-discrete formulation. We obtain optimal error estimates for the stress tensor in the full H(div)-norm. 
	
	We propose a second order accurate implicit time stepping scheme for the time discretization of the problem. We point out that it is generally not straightforward to compute explicitly the basis functions of the finite element space described above. We overcome this drawback by use of a hybridization technique (in the spirit of \cite{ab}): We enforce the  continuity requirement of the normal components of the stress tensor across the interelement boundary through a Lagrange multiplier representing the trace of the velocity field on the boundaries of the mesh elements. Then, we use a static condensation procedure to recast, at each iteration step, the global linear system of equations in terms of the single hybrid variable. After solving for the Lagrange multiplier, the remaining unknowns can be recovered by local and independent calculations on each element. In this way, the computational cost required at iteration step is comparable to that of the hybridized version of the $k^{\text{th}}$-order AFW element for elasticity.

	\item We also propose  and analyze the classical symmetric interior penalty discontinuous Galerkin (DG) method \cite{DiPietroErn} for spatial discretization adapted to an H(div)-setting \cite{conThan}. In particular, we derive optimal and arbitrary order accurate error bounds in the DG-energy norm. DG schemes are known for their ability to handle complicated meshes and discontinuous data, and for being ideally suited for $hp$-adaptivity. Unfortunately, it is not possible to derive a hybridized version of this particular DG method in order to reduce its computational cost. However,  the mass matrix arising from this spatial DG discretization is block-diagonal, with block size equal to the  number of degrees of freedom per element. It can be inverted at very low computational cost. Consequently, we suggest an explicit centered finite difference scheme for the time variable in order to obtain a fully explicit time stepping method that can be  computationally  competitive if the CFL condition is not too restrictive. 

\end{itemize}
 
 
The plan of the paper is as follows. We begin by introducing in the next section the notations,  definitions, and basic results that facilitate the derivation, in Section 3, of a new mixed formulation for Zener's model of linear viscoelasticity.  In Section 4 we obtain an energy estimate and illustrate how it can be used to prove the well-posedness of the problem by means of a Galerkin procedure. In Section 5 we recall basic properties of the AFW mixed finite element and we construct a projector that plays an essential role in Sections 6 and 7, where we introduce and analyse the convergence of CG and DG semidiscrete approximations of the problem, respectively. In both cases, we provide a priori error bounds for the semidiscrete solution in the energy norms. In Section 8, we take into account the different characteristics of  the CG and DG methods to propose fully discrete schemes that are  specially tailored for each case. 

\section{Notations and preliminary results}\label{section2}
In this section we provide the notations,  definitions, and preliminary results that will be  employed all along the paper. We first denote by $\boldsymbol{I}$ the identity matrix of $\R^{d \times d}$ ($d=2,3$), and by $\mathbf{0}$ the null vector in $\R^d$ or the null tensor in $\R^{d \times d}$. In addition, the component-wise inner product of two matrices $\bsig, \,\btau \in\R^{d \times d}$ is defined by $\bsig:\btau:= \tr( \bsig^{\t} \btau)$, where $\tr\btau:=\sum_{i=1}^d\tau_{ii}$ and $\btau^{\t}:=(\tau_{ji})$ stand for the trace and the transpose of $\btau = (\tau_{ij})$, respectively. In turn, for $\bsig:\Omega\to \R^{d \times d}$ and $\bu:\Omega\to \R^d$, we set the row-wise divergence $\bdiv \bsig:\Omega \to \R^d$, the  row-wise gradient $\nabla \bu:\Omega \to \R^{d \times d}$, and the strain tensor $\beps(\bu) : \Omega \to \R^{d \times d}$ as
\[
(\bdiv \bsig)_i := \sum_j   \partial_j \sigma_{ij} \,, \quad (\nabla \bu)_{ij} := \partial_j u_i\,,
\quad\hbox{and}\quad \beps(\bu) := \frac{1}{2}\left[\nabla\bu+(\nabla\bu)^{\t}\right]\,,
\]
respectively. Next, we let $\Omega$ be a polyhedral Lipschitz bounded domain of $\R^d$ $(d=2,3)$, with boundary $\partial\Omega$. Furthermore, for $s\in \mathbb{R}$, $\norm{\cdot}_{s,\Omega}$ stands indistinctly for the norm of the Hilbertian Sobolev spaces $\H^s(\Omega)$, $\bH^s(\Omega):= [\H^s(\Omega)]^d$ or $\mathbb{H}^s(\Omega):= [\H^s(\Omega)]^{d \times d}$, with the convention $\H^0(\Omega):=\L^2(\Omega)$. In all what follows, $(\cdot, \cdot)$ stands for the inner product in $\L^2(\Omega)$, $\bL^2(\Omega):=[\L^2(\Omega)]^d$, $\mathbb{L}^2(\Omega):=[\L^2(\Omega)]^{d \times d}$, and $\mathfrak{L}^2(\Omega) := \mathbb{L}^2(\Omega) \times \mathbb{L}^2(\Omega)$. We also introduce the Hilbert space  $\mathbb{H}(\bdiv, \Omega):=\big\{\btau\in\mathbb{L}^2(\Omega):\ \bdiv\btau\in\bL^2(\Omega)\big\}$ and denote the corresponding norm $\norm{\btau}^2_{\mathbb H(\bdiv,\Omega)}:=\norm{\btau}_{0,\Omega}^2+\norm{\bdiv\btau}^2_{0,\Omega}$. Let $\bn$ be the outward unit normal vector to $\partial \Omega$. It is well-known that the normal trace operator $ [\cC^\infty(\overline\Omega)]^{d\times d}\ni \btau \to (\btau|_{\partial \Omega})\bn$ can be extended to a linear continuous mapping $(\cdot|_{\partial \Omega})\bn:\, \mathbb{H}(\bdiv, \Omega) \to \bH^{-\frac{1}{2}}(\partial \Omega)$, where $\bH^{-\frac{1}{2}}(\partial \Omega)$ is the dual of $\bH^{\frac{1}{2}}(\partial \Omega)$.

\paragraph{Sobolev spaces for time dependent problems.}
Since we will deal with a time-domain problem, besides the Sobolev spaces defined above, we need to introduce spaces of functions acting on a bounded time interval $(0,T)$ and with values in a separable Hilbert space $V$, whose norm is denoted here by $\norm{\cdot}_{V}$. In particular, for $1 \leq p\leq \infty$, $\L^p(V)$ is the space of classes of functions $f:\ (0,T)\to V$ that are B\"ochner-measurable and such that $\norm{f}_{\L^p(V)}<\infty$, with 
\[
\norm{f}^p_{\L^p(V)}:= \int_0^T\norm{f(t)}_{V}^p\, \text{d}t\quad \hbox{for $1\leq p < \infty$},
\quad\hbox{and} \quad \norm{f}_{\L^\infty(V)}:= \esssup_{[0, T]} \norm{f(t)}_V.
\]
We use the notation $\mathcal{C}^0(V)$ for the Banach space consisting of all continuous functions $f:\ [0,T]\to V$. More generally, for any $k\in \mathbb{N}$, $\mathcal{C}^k(V)$ denotes the subspace of $\mathcal{C}^0(V)$ of all functions $f$ with (strong) derivatives $\frac{\text{d}^j f}{\text{d}t^j}$ in $\mathcal{C}^0(V)$ for all $1\leq j\leq k$. In what follows, we will use indistinctly the notations $\dot{f}:= \frac{\text{d} f}{\text{d}t}$ and $\ddot{f} := \frac{\text{d}^2 f}{\text{d}t^2} $ to express the first and second derivatives with respect to the variable $t$. Furthermore,  we will use the Sobolev space
\[
\begin{array}{c}
\W^{1, p}(V):= \left\{f: \ \exists g\in \L^p( V)
\ \text{and}\ \exists f_0\in V\ \text{such that}\
 f(t) = f_0 + \int_0^t g(s)\, \text{d}s\quad \forall t\in [0,T]\right\}.
\end{array}
\]
We define the space $\W^{k, p}(V)$ recursively  for all $k\in\mathbb{N}$ and denote $\H^k(V):= \W^{k,2}(V)$.

\paragraph{The inf-sup condition and the closed range theorem.}
Given two Hilbert spaces $S$ and $Q$ and a bounded bilinear form $a:S\times Q\to\R$, we denote $\ker(a):=\set{s\in S:\ a(s,q)=0\ \forall\, q\in Q}$.
We say that $a$ satisfies the inf-sup condition for the pair $\{ S,Q\}$, whenever there exists $\kappa>0$ such that
\begin{equation}\label{0infsup}
	\sup_{0\neq s\in S}\frac{a(s,q)}{\norm{s}_{ S}} \ge \kappa \, \norm{q}_{Q} \qquad\forall\, q\in Q.
\end{equation}
We will repeatedly use the well-known fact  that (see \cite{BoffiBrezziFortinBook}) if $a$ satisfies the inf-sup condition for the pair $\{ S, Q\}$ and if $\ell\in S'$ vanishes identically on $\ker(a)$, then there exists a unique $q\in Q$ such that 
\[
a(s,q) = \ell(s) \quad \forall\, s\in S\,.
\]

Throughout the rest of the paper, given any positive expressions $X$ and $Y$ depending on the meshsize  $h$ of a triangulation, the notation $X \,\lesssim\, Y$  means that $X \,\le\, C\, Y$ with a constant $C > 0$ independent of the mesh size $h$.

\section{A mixed variational formulation of the Zener model}\label{section3}
We aim to study  the dynamical equation of motion 
\[
\rho\ddot{\bu}-\bdiv\bsig =\bF \quad \text{in $\Omega\times (0, T]$},
\]
for a viscoelastic body represented by a polyhedral Lipschitz domain  $\Omega\subset \mathbb R^d$ ($d=2,3$). The constant $T>0$ stands for the time interval endpoint,  $\bu:\Omega\times[0, T] \to \R^d$ is the displacement field, $\bsig:\Omega\times[0,T]\to \R^{d\times d}$ is the stress tensor and $\bF:\Omega\times[0, T] \to \R^d$ represents the body force. The linearized strain tensor $\beps(\bu)$ is assumed to determine stress through Zener's constitutive law for viscoelasticity (see \cite{salencon}):
\begin{equation}\label{oldConstitutive}
	\bsig +  \omega \dot{\bsig} = \cC \beps(\bu) + \omega \cD \beps(\dot{\bu}) \quad \text{in $\Omega\times (0, T]$},
\end{equation}
where $\cC$ and $\cD$ are two  symmetric and positive definite tensors of order 4.  To guaranty that the system is dissipative we assume that $\cD - \cC$ is also positive definite. We suppose that the mass density $\rho$ and the relaxation time $\omega$ are piecewise constant functions. More specifically, we assume  that there exists a  disjoint partition of $\bar \Omega$ into polygonal/polyhedral subdomains $\big\{\bar\Omega_j,\ j= 1,\ldots,J\big\}$  such that $\rho|_{\Omega_j}:= \rho_j>0$ and $\omega|_{\Omega_j}:= \omega_j\geq 0$ for  $j=1,\ldots,J$. We point out that, in the regions $\Omega_j$ where the piecewise constant function $\omega$ is zero,  the constitutive law \eqref{oldConstitutive} becomes the familiar Hooke's Law. Hence, it is natural to  introduce the set of indices $\mathcal I_E := \big\{j\in \set{1,\ldots, J}:\ \omega_j = 0\big\}$ and $\mathcal I_V := \big\{j\in \set{1,\ldots, J}:\ \omega_j \neq 0\big\}$ and to split $\Omega$ into  a part $\Omega_E := \cup_{j\in \mathcal I_E} \Omega_j$ displaying a purely elastic behaviour and a part  $\Omega_V:= \cup_{j\in \mathcal I_V} \Omega_j$ exhibiting viscoelastic properties. In the sequel, we will need  the piecewise constant function $\tilde \omega^{-1}$ defined by 
\[
\tilde \omega^{-1}|_{\Omega_j} :=\begin{cases}
	\omega_j^{-1} & \text{if $j\in \mathcal I_V$},
	\\
	0 & \text{if $j\in \mathcal I_E$},
\end{cases} \quad j= 1,\ldots, J.
\]

For the sake of simplicity in exposition, we restrict our analysis to the case of a prescribed displacement field on $\Gamma:=\partial \Omega$. Namely, we impose the Dirichlet boundary condition $\bu = \boldsymbol{g}$ on $\Gamma \times (0, T]$, with $\boldsymbol{g}:\, \Gamma \times (0, T]\to \mathbb R^d$ given. Finally, we assume the initial conditions:
\begin{equation}\label{init1}
	\bu(0) = \bu_0 \quad\text{in $\Omega$}, \quad 
 \dot{\bu}(0) = \bu_1 \quad\text{in $\Omega$},   \quad \text{and} \quad 
 \bsig(0) = \bsig_0 \quad\text{in $\Omega_V$}.
\end{equation}

Our aim is to impose the stress tensor $\bsig$ as a primary unknown. To this end, we  decompose this variable into a purely elastic component $\bgam:= \cC \beps(\bu)$ and a viscoelastic component $\omega \bze:= \bsig - \bgam$, which allows us to transform the constitutive law \eqref{oldConstitutive} into  
\[
\omega^2 \dot \bze + \omega \bze = \omega (\cD - \cC) \beps(\dot \bu).
\]
Hence, if we adopt the notations $\cA := \cC^{-1}$ and $\cV:= (\cD - \cC)^{-1}$, our model problem can be written as follows in terms of $\bu$, $\bgam$, and $\bze$:
 \begin{align}\label{split123}
 \begin{split}
 \rho\ddot{\bu}-\bdiv(\bgam + \omega \bze) &=\bF  \quad\text{in $\Omega\times (0, T]$}, 
 \\[1ex]
 (\bgam + \omega \bze) &= (\bgam + \omega \bze)^{\t} \quad \text{in $\Omega\times (0, T]$},
 \\[1ex]
 \cA \ddot{\bgam} &= \beps(\ddot{\bu})  \quad \text{in $\Omega\times (0, T]$}, 
 \\[1ex]
 \omega^2 \cV \ddot{\bze} +  \omega\cV \dot{\bze} &= \omega\beps(\ddot{\bu}) \quad \text{in $\Omega\times (0, T]$},
 \\[1ex]
 \bu &= \boldsymbol{g} \quad  \text{on $\Gamma\times (0, T]$}.
\end{split}
\end{align}
The main unknown of our formulation is then a pair of tensors $\kp:=(\bgam, \bze)\in \mathfrak{L}_V^2(\Omega):= \mathbb L^2(\Omega)  \times \mathbb L^2(\Omega_V)$, where
\[
\mathbb L^2(\Omega_V) := \set{\btau \in \mathbb L^2(\Omega):\ \btau|_{\Omega_E} = \mathbf 0 }.
\]
  We introduce the linear and bounded operators 
\begin{align*}
  \jmath_\omega:\,  \mathfrak{L}_V^2(\Omega) \, &\longrightarrow \,  \mathfrak{L}_V^2(\Omega) 
  &
   \jmath_\omega^+:\,  \mathfrak{L}_V^2(\Omega) \, &\longrightarrow \,  \mathbb{L}^2(\Omega) 
  \\[1ex]
  \kq= (\bet, \btau) &\longrightarrow \, \jmath_\omega \kq := (\bet, \omega\btau)  
  &
  \kq= (\bet, \btau) &\longrightarrow \, \jmath_\omega^+ \kq := \bet + \omega\btau
\end{align*}
and consider the space  
\[
 \mathfrak{S}:= 
\big\{ \kq \in \mathfrak{L}_V^2(\Omega):\ \jmath_\omega^+\kq \in \mathbb H(\bdiv, \Omega)  \big\},
\]
endowed with the Hilbertian norm  
\[
\norm{\kq}^2_{\mathfrak{S}}:= \norm{\jmath_\omega \kq}_{\mathfrak{L}_V^2(\Omega)}^2+ \norm{\bdiv \jmath_\omega^+\kq}^2_{0,\Omega} = \norm{\bet}^2_{0,\Omega} + \norm{\omega\btau}^2_{0,\Omega_V}  + \norm{\bdiv (\bet + \omega \btau) }^2_{0,\Omega},\quad \kq = (\bet, \btau). 
\] 
To take into account the symmetry of the stress tensor (second equation of \eqref{split123}), we introduce the space $\mathbb{L}^2_{\text{sym}}(\Omega):= \big\{\btau \in \mathbb{L}^2(\Omega):\ \btau = \btau^{\t}\big\}$ and let  $\mathfrak L_{\text{sym}}^2(\Omega):= \set{\kq \in  \mathfrak L_V^2(\Omega):\, \jmath_\omega^+ \kq \in \mathbb{L}^2_{\text{sym}}(\Omega)}$. We will show that  $\mathfrak{S}_{\text{sym}} := \mathfrak{S}\cap \mathfrak L_{\text{sym}}^2(\Omega)$ is the adequate energy space for problem \eqref{split123}. We  consider an arbitrary $\kq = (\bet, \btau)\in \mathfrak S_{\text{sym}}$, test the third and fourth rows of \eqref{split123} with $\bet$ and $\btau$ and add the resulting equations  to get
\begin{equation}\label{const+}
(\cA \ddot{\bgam},\bet) + ( \cV (\omega\ddot{\bze} + \dot{\bze}) ,\omega \btau)  = (\beps(\ddot{\bu}), \bet + \omega\btau) = \big(\nabla \ddot\bu , \bet + \omega\btau\big),
\end{equation} 
where the last identity follows from the fact that the tensor $\bet + \omega\btau$ is symmetric. 
Next, we integrate by parts in the right hand-side of \eqref{const+} and take into account the boundary condition on $\Gamma\times (0, T]$ to obtain
\begin{equation}\label{var0}
(\cA \ddot{\bgam},\bet) + ( \cV (\omega\ddot{\bze} + \dot{\bze}) ,\omega \btau)  =   
- \big(\ddot{\bu}, \bdiv(\bet + \omega\btau)\big) + \big<\ddot{\boldsymbol{g}}, (\bet + \omega\btau)\bn \big>_\Gamma, 
\end{equation}
where $\bn$ represents the exterior unit normal vector on $\Gamma$ and $\dual{\cdot, \cdot}_\Gamma$ holds for the duality pairing between $\bH^{\frac{1}{2}}(\Gamma)$ and $\bH^{-\frac{1}{2}}(\Gamma)$.  
Substituting back $\ddot{\bu} =  \rho^{-1}\big(\bF + \bdiv(\bgam + \omega \bze)\big)$ into \eqref{var0} yields  
\begin{equation}\label{var1}
A\big( \jmath_\omega \ddot\kp + \pi_2 \dot\kp, \jmath_\omega\kq \big)  
+ \big(\bdiv\jmath_\omega^+\kp , \bdiv\jmath_\omega^+\kq \big)_\rho  = - \big(\bF , \bdiv\jmath_\omega^+\kq \big)_\rho + \big<\ddot{\boldsymbol{g}}, \jmath_\omega^+\kq\bn \big>_\Gamma,\quad \forall \kq =(\bet, \btau )\in \mathfrak S_{\text{sym}},
\end{equation}
where  $(\bu,\bv)_\rho := (\tfrac{1}{\rho}\bu, \bv)$ for all $\bu$, $\bv$ in $\bL^2(\Omega)$,  
\[
A\big( \kp,\kq\big) := (\cA\bgam, \bet) + (\cV \bze, \btau), \quad  \kp=(\bgam, \bze),\, \kq=(\bet, \btau) \in \mathfrak{L}^2(\Omega),
\]
and  where $\pi_2$ stands for the $\mathfrak{L}^2(\Omega)$-orthogonal projection  onto $\{\mathbf 0\} \times \mathbb L^2(\Omega)$, i.e., $\pi_2\kq:= (\mathbf 0, \btau)$ for all $\kq=(\bet, \btau)\in \mathfrak{L}^2(\Omega)$. It is important to notice that, as a consequence of our hypotheses on $\cC$ and $\cD$, the bilinear form $A$ is symmetric, bounded and coercive, i.e., there exist positive constants $M$ and $\alpha$, depending only on $\cC$ and $\cD$, such that
\begin{equation}\label{contA}
\big| A( \kp,\kq) \big| \leq M \| \kp \|_{0,\Omega} 
\|\kq \|_{0,\Omega} \qquad \forall \, \kp, \kq \in \mathfrak{L}^2(\Omega),
\end{equation}
\begin{equation}\label{ellipA}
 A( \kq,\kq) \geq \alpha  
\|\kq \|^2_{0,\Omega} \qquad \forall  \,\kq \in \mathfrak{L}^2(\Omega).
\end{equation}

Next, we need to introduce function spaces that are used in the study of evolution problems of second order in time with energy methods, cf. \cite[Chapter XVIII]{Lions}. We begin with the following technical result.

\begin{lemma}\label{dense}
	The continuous embedding $\mathfrak{S}_{\emph{sym}}   \hookrightarrow \mathfrak{L}_{\emph{sym}}^2(\Omega)$ is dense.
\end{lemma}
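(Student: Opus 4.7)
The continuous embedding is elementary: on $\Omega_V$ the weight $\omega$ is bounded below by $\omega_{\min}:=\min_{j\in \mathcal I_V}\omega_j > 0$, hence $\norm{\btau}_{0,\Omega_V} \le \omega_{\min}^{-1}\norm{\omega\btau}_{0,\Omega_V}$, and dropping the divergence contribution yields $\norm{\kq}_{\mathfrak L_V^2(\Omega)} \lesssim \norm{\kq}_{\mathfrak S}$ for every $\kq\in\mathfrak S_{\text{sym}}$. So all the content of the lemma lies in density, and my plan is to reduce density to the classical fact that smooth, symmetric, compactly supported tensors are dense in $\mathbb L^2_{\text{sym}}(\Omega)$.

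Concretely, given $\kq = (\bet,\btau)\in \mathfrak L_{\text{sym}}^2(\Omega)$, the combination $\bsig := \jmath_\omega^+\kq = \bet + \omega\btau$ lies in $\mathbb L^2_{\text{sym}}(\Omega)$, so I would choose symmetric tensors $\bsig_n\in[\mathcal C_c^\infty(\Omega)]^{d\times d}$ with $\bsig_n\to\bsig$ in $\mathbb L^2(\Omega)$; these automatically belong to $\mathbb H(\bdiv,\Omega)\cap \mathbb L^2_{\text{sym}}(\Omega)$. Then I would set $\kq_n = (\bet_n, \btau_n)$ via
\[
\bet_n := \bet \ \text{on}\ \Omega_V,\qquad \bet_n := \bsig_n \ \text{on}\ \Omega_E,\qquad \btau_n := \tilde\omega^{-1}(\bsig_n - \bet) \ \text{on}\ \Omega_V,
\]
(and $\btau_n := \mathbf 0$ on $\Omega_E$). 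Using $\omega\tilde\omega^{-1} = 1$ on $\Omega_V$ and $\omega=0$ on $\Omega_E$, one checks that $\jmath_\omega^+\kq_n = \bsig_n$ identically on $\Omega$, so $\jmath_\omega^+\kq_n \in \mathbb H(\bdiv,\Omega)\cap \mathbb L^2_{\text{sym}}(\Omega)$ and hence $\kq_n\in \mathfrak S_{\text{sym}}$.

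To close the argument I would verify convergence in the $\mathfrak L_V^2$-norm directly: on $\Omega_E$, the condition $\omega=0$ forces $\bet = \bsig$, so $\norm{\bet - \bet_n}_{0,\Omega_E} = \norm{\bsig - \bsig_n}_{0,\Omega_E}\to 0$, while $\bet_n = \bet$ on $\Omega_V$ contributes nothing; a one-line computation gives $\btau - \btau_n = \tilde\omega^{-1}(\bsig - \bsig_n)$ on $\Omega_V$, which tends to zero by the uniform boundedness of $\tilde\omega^{-1}$ on $\Omega_V$. The one subtle point — and what I expect to be the main obstacle in a careful write-up — is that a generic element of $\mathfrak L_{\text{sym}}^2(\Omega)$ has components $\bet$ and $\btau$ that need not individually be symmetric on $\Omega_V$, so one cannot naively regularise them independently; the trick is to regularise only the symmetric combination $\jmath_\omega^+\kq$ and to absorb the resulting perturbation entirely into $\btau$ on $\Omega_V$ and into $\bet$ on $\Omega_E$.
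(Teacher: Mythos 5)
Your argument is correct, and it takes a genuinely different (and slightly cleaner) route to the symmetric case than the paper does. The paper first mollifies the two fields separately --- it picks smooth $\bet_n \to \bet$ and $\btau_n \to \omega\btau$ in $\mathbb L^2(\Omega)$ and sets $\kq_n := (\bet_n, \tilde\omega^{-1}\btau_n)$, which proves density of $\mathfrak S$ in all of $\mathfrak L_V^2(\Omega)$ --- and then repairs the symmetry defect a posteriori: it replaces the first component by $\widehat{\bet}_n := \bet_n - \tfrac{1}{2}\big(\jmath_\omega^+\kq_n - (\jmath_\omega^+\kq_n)^{\t}\big)$, so that $\jmath_\omega^+\widehat{\kq}_n$ is the symmetric part of a smooth tensor, and the correction tends to zero precisely because the limit $\jmath_\omega^+\kq$ is symmetric. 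You instead mollify only the combination $\bsig = \jmath_\omega^+\kq$ within the class of \emph{symmetric} smooth compactly supported tensors (legitimate, since componentwise mollification preserves symmetry) and back-solve for the pair so that $\jmath_\omega^+\kq_n = \bsig_n$ holds exactly; the $\mathbb H(\bdiv,\Omega)$ and symmetry requirements are then automatic and no correction step is needed. Both proofs rest on the same structural observation, which you state explicitly and the paper uses implicitly: membership in $\mathfrak S_{\text{sym}}$ constrains only the combination $\jmath_\omega^+\kq$, the individual components being merely $\L^2$; and both convergence checks use $\omega \geq \min_{j\in\mathcal I_V}\omega_j > 0$ on $\Omega_V$ through the boundedness of $\tilde\omega^{-1}$ (your identities $\bet=\bsig$ on $\Omega_E$ and $\btau - \btau_n = \tilde\omega^{-1}(\bsig-\bsig_n)$ on $\Omega_V$ are both right). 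What the paper's route buys is the reusable intermediate fact that $\mathfrak S$ is dense in the full, non-symmetric space $\mathfrak L_V^2(\Omega)$; what yours buys is a one-step symmetric construction in which $\jmath_\omega^+\kq_n$ is exactly a smooth field, sidestepping the mild bookkeeping in the paper's version (where $\jmath_\omega^+\kq_n = \bet_n + \btau_n$ really requires the approximants $\btau_n$ of $\omega\btau$ to be taken supported in $\Omega_V$, since $\omega\tilde\omega^{-1}$ is the indicator of $\Omega_V$, not $1$).
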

\begin{proof}
The main argument of the proof is the well-known density in $\L^2(\Omega)$ of the space  of indefinitely differentiable functions with compact support in $\Omega$. Given an arbitrary $\kq=(\bet, \btau)\in \mathfrak{L}_V^2(\Omega)$, there exist sequences $\set{\bet_n}_n$  and $\set{\btau_n}_n$ of smooth tensors converging  in the $\mathbb L^2(\Omega)$-norm to $\bet$ and $\omega\btau$, respectively. The sequence $\set{\kq_n}_n:= \set{(\bet_n, \tilde \omega^{-1}\btau_n)}_n$ is a subset of $\mathfrak S$ since, by construction, $\jmath_\omega^+ \kq_n = \bet_n+\btau_n\in \mathbb H(\bdiv, \Omega)$ and the convergence of $\{\kq_n\}_n$ to $\kq$ in $\mathfrak{L}^2(\Omega)$, proves that $\mathfrak{S}$ is a dense subset of   $\mathfrak{L}_V^2(\Omega)$. 

Now, if $\kq=(\bet, \btau)\in \mathfrak{L}_{\text{sym}}^2(\Omega)$, we still have that  $\set{\kq_n}_n:= \set{(\bet_n, \tilde \omega^{-1}\btau_n)}_n\subset \mathfrak S$ converges to $\kq$ in $\mathfrak L^2(\Omega)$.  It follows that the sequence defined by $\set{\widehat{\kq}_n}_n:=\set{ (\widehat{\bet}_n, \tilde \omega^{-1}\btau_n)}_n$, with  $\widehat{\bet}_n := \bet_n - \dfrac{\jmath_\omega^+\kq_n - (\jmath_\omega^+\kq_n)^{\t}}{2}$, belongs to $\mathfrak{S}_{\text{sym}}$ because the symmetric tensor $\jmath_\omega^+ \widehat{\kq}_n = \dfrac{\jmath_\omega^+\kq_n + (\jmath_\omega^+\kq_n)^{\t}}{2}\in \mathbb{H}(\bdiv, \Omega)$ converges to $(\bet - \dfrac{\jmath_\omega^+\kq - (\jmath_\omega^+\kq)^{\t}}{2}, \btau) =\kq$ in $\mathfrak L^2(\Omega)$, which proves the result. 
\end{proof}

Thanks to Lemma~\ref{dense}, we can identify the Hilbert space $\mathfrak L^2_{\text{sym}}(\Omega)$ with its dual and consider 
 the sequence $\mathfrak{S}_{\text{sym} } \hookrightarrow \mathfrak L^2_{\text{sym}}(\Omega) \hookrightarrow \mathfrak{S}_{\text{sym} }'$ of continuous and dense inclusions, where $\mathfrak{S}_{\text{sym} }'$ stands for the dual of $\mathfrak{S}_{\text{sym} }$. Under these conditions, it can be shown (cf. \cite{Lions}) that  if $\kp\in \L^2(\mathfrak{S}_{\text{sym} })$ and $\dot\kp\in \L^2(\mathfrak{S}_{\text{sym} }')$ then 
\[
\frac{\text{d}}{\text{d}t} (\kp(t), q) = \dual{\dot \kp(t), \kq}\quad \forall \kq \in  \mathfrak{S}_{\text{sym} } 
\]
holds in the sense of distributions on $(0, T)$, where $\dual{\cdot, \cdot}$ represents the duality brackets between $\mathfrak{S}_{\text{sym} }'$ and $\mathfrak{S}_{\text{sym} }$.  Keeping this fact in mind, we introduce the Hilbert space  
\[
W(\mathfrak{S}_{\text{sym} }, \mathfrak{S}_{\text{sym} }') := \set{\kq\in \L^2(\mathfrak{S}_{\text{sym} }):\  \dot\kq\in V(\mathfrak{S}_{\text{sym} }, \mathfrak{S}_{\text{sym} }') },
\]
where
\[
V(\mathfrak{S}_{\text{sym} }, \mathfrak{S}_{\text{sym} }') :=
\set{\kq\in \L^2(\mathfrak{S}_{\text{sym} }):\  \dot\kq\in \L^2(\mathfrak{S}_{\text{sym} }') },
\]
and consider the following variational formulation of \eqref{split123}:  Given $\bF\in \L^{2}(\bL^2(\Omega))$ and $\boldsymbol{g}\in \H^{2}(\bH^{1/2}(\Gamma))$, we look for $\kp\in W(\mathfrak{S}_{\text{sym} }, \mathfrak{S}_{\text{sym} }')$ satisfying 
\begin{align}\label{varFormR1-varFormR2}
\begin{split}
\frac{\text{d}}{\text{d}t}   A\big( \jmath_\omega \dot\kp + \pi_2 \kp, \jmath_\omega\kq \big)  + \big(\bdiv\jmath_\omega^+\kp , \bdiv\jmath_\omega^+\kq \big)_\rho
&= - \big(\bF , \bdiv\jmath_\omega^+\kq \big)_\rho + \big<\ddot{\boldsymbol{g}}, \jmath_\omega^+\kq\bn \big>_\Gamma,\quad \forall \kq \in \mathfrak{S}_{\text{sym} },
\\[1ex]
\kp(0) &= \kp_0, \qquad \dot\kp(0) = \kp_1,
 \end{split}
\end{align} 
where $\kp_0 = (\bgam_0, \bze_0)\in \mathfrak{S}_{\text{sym}}$ and $\kp_1= (\bgam_1, \bze_1) \in \mathfrak{L}_{\text{sym}}^2(\Omega)$ with 
\begin{align}\label{initcompatible}
	\begin{split}
		\bgam_0 &:= \cC \beps(\bu_0),\quad   \bze_0 = \tilde\omega^{-1} \big( \bsig_0 - \bgam_0 \big),
\\[1ex]
\bgam_1 &:= \cC \beps(\bu_1), \quad\bze_1 := \tilde\omega^{-1} \big( \cD \beps(\bu_1) - \bgam_1 - \bze_0\big).
	\end{split}
\end{align}

We notice that the initial conditions are meaningful because of the embeddings  $\H^1(\mathfrak{S}_{\text{sym}}) \hookrightarrow \cC^0(\mathfrak{S}_{\text{sym}})$ and $V(0, T, \mathfrak{S}_{\text{sym} }, \mathfrak{S}_{\text{sym} }')\hookrightarrow  \cC^0( \mathfrak{L}^2_{\text{sym} }(\Omega))$, see \cite[Chapter XVIII, Section 1, Theorem 1]{Lions}. 

We will show that in the next section that problem \eqref{varFormR1-varFormR2} is  well-posed if some regularity assumptions on  $t\mapsto \bF(t)$ and $t\mapsto\boldsymbol{g}(t)$ are fulfilled.

\section{Existence and uniqueness}

We aim  to obtain formal energy estimates for \eqref{varFormR1-varFormR2} in terms of the energy functional $\mathcal{E}:\, W(\mathfrak{S}_{\text{sym} }, \mathfrak{S}_{\text{sym} }') \to \cC^0([0,T])$  defined by
\begin{equation}\label{defE}
\mathcal{E}\big(\kq\big)(t):= \frac{1}{2} A\big(\jmath_\omega\dot\kq(t), \jmath_\omega\dot\kq(t)\big) + \frac{1}{2}  
\big( \bdiv \jmath_\omega^+\kq(t), \bdiv \jmath_\omega^+\kq(t) \big)_\rho.
\end{equation}

\begin{lemma}
	Assume that $\bF \in \H^1(\bL^2(\Omega))$ and $\boldsymbol{g}\in \H^3(\bH^{1/2}(\Gamma))$. Then,  if $\kp$ is a solution of  \eqref{varFormR1-varFormR2}, it satisfies
\begin{equation}\label{AprioriBound_h}
\max_{t\in [0,T]}\mathcal{E}(\kp)^{1/2}(t)  
\lesssim \norm{\bF}_{\H^1(\bL^2(\Omega))} +   \norm{ \boldsymbol{g} }_{\H^3(\bH^{1/2}(\Gamma))} + \norm{\kp_0}_{\mathfrak{S}}  + \norm{\jmath_\omega \kp_1}_{\mathfrak{L}_V^2(\Omega)}.
\end{equation}
\end{lemma}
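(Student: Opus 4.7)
The plan is to derive the energy identity by testing \eqref{varFormR1-varFormR2} formally with $\kq = \dot\kp(t)$; this manipulation can be made rigorous at the level of a Galerkin approximation and then passed to the limit. Equivalently, one may work with the undifferentiated identity \eqref{var1}, since $\jmath_\omega\ddot\kp + \pi_2\dot\kp = \frac{\text{d}}{\text{d}t}(\jmath_\omega\dot\kp + \pi_2\kp)$. Using the symmetry of $A$, the contributions $A(\jmath_\omega\ddot\kp, \jmath_\omega\dot\kp)$ and $(\bdiv\jmath_\omega^+\kp, \bdiv\jmath_\omega^+\dot\kp)_\rho$ combine into $\frac{\text{d}}{\text{d}t}\mathcal{E}(\kp)$, while the remaining cross term $A(\pi_2\dot\kp, \jmath_\omega\dot\kp) = (\omega\cV\dot\bze, \dot\bze)$ is nonnegative thanks to the ellipticity of $\cV$ and the fact that $\omega\ge 0$, and so can be discarded.

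I would then integrate from $0$ to $t$ and transfer the time derivative off $\dot\kp$ on the right-hand side via integration by parts in $t$, exploiting the identity $\jmath_\omega^+\dot\kp = \frac{\text{d}}{\text{d}t}\jmath_\omega^+\kp$. This converts the forcing and boundary contributions into boundary terms at $t=0$ and $t$, plus time integrals involving $\dot\bF$ and $\dddot{\boldsymbol{g}}$, which is exactly what motivates the hypotheses $\bF\in\H^1(\bL^2(\Omega))$ and $\boldsymbol{g}\in\H^3(\bH^{1/2}(\Gamma))$.

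Each resulting term can then be controlled via Cauchy--Schwarz and the continuity of the normal trace $\mathbb{H}(\bdiv,\Omega)\to \bH^{-1/2}(\Gamma)$. The energy directly dominates $\|\bdiv\jmath_\omega^+\kp(t)\|_{0,\Omega}$, but not the low-order component $\|\jmath_\omega\kp(t)\|_{\mathfrak{L}^2_V(\Omega)}$ of the $\mathfrak{S}$-norm that enters after these estimates; this piece must be recovered by writing $\jmath_\omega\kp(t) = \jmath_\omega\kp_0 + \int_0^t \jmath_\omega\dot\kp(s)\,\text{d}s$ and invoking \eqref{ellipA} to get $\|\jmath_\omega\kp(t)\|_{\mathfrak{L}^2_V(\Omega)} \lesssim \|\jmath_\omega\kp_0\|_{\mathfrak{L}^2_V(\Omega)} + \int_0^t \mathcal{E}(\kp)^{1/2}(s)\,\text{d}s$. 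The initial energy is in turn bounded by $\|\jmath_\omega\kp_1\|_{\mathfrak{L}^2_V(\Omega)}^2 + \|\bdiv\jmath_\omega^+\kp_0\|_{0,\Omega}^2 \lesssim \|\jmath_\omega\kp_1\|_{\mathfrak{L}^2_V(\Omega)}^2 + \|\kp_0\|_{\mathfrak{S}}^2$. A standard Young's inequality then absorbs terms proportional to $\mathcal{E}(\kp)(t)$ into the left-hand side, and Gronwall's lemma closes the residual integral $\int_0^t \mathcal{E}(\kp)^{1/2}(s)\,\text{d}s$ that appears inside the boundary contributions, yielding \eqref{AprioriBound_h} after taking the maximum in $t$.

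The principal obstacle is the non-homogeneous Dirichlet datum: since the duality pairing on $\Gamma$ only sees the normal trace of $\jmath_\omega^+\kp$, integration by parts in time on the boundary term is unavoidable, and this is what produces $\dddot{\boldsymbol{g}}$ and forces the $\H^3$ time-regularity of $\boldsymbol{g}$. A closely related subtlety is that the energy $\mathcal{E}(\kp)$ does not control the full $\mathfrak{S}$-norm of $\kp$, so the low-order part has to be retrieved by a time-integration argument; this is why the final step relies on Gronwall's inequality rather than a purely algebraic absorption.
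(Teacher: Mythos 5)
Your proposal is correct and follows essentially the same route as the paper: test with $\kq=\dot\kp$, discard the nonnegative viscous term $(\omega\cV\dot\bze,\dot\bze)$, integrate by parts in time to shift the derivative off $\dot\kp$ (producing $\dot\bF$ and $\dddot{\boldsymbol{g}}$, hence the stated regularity hypotheses), estimate via Cauchy--Schwarz and the normal trace theorem, recover the low-order part of the $\mathfrak S$-norm from $\jmath_\omega\kp(t)=\jmath_\omega\kp_0+\int_0^t\jmath_\omega\dot\kp(s)\,\mathrm{d}s$, and bound $\mathcal{E}(\kp)(0)$ by $\norm{\kp_0}_{\mathfrak S}^2+\norm{\jmath_\omega\kp_1}_{\mathfrak{L}_V^2(\Omega)}^2$. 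The only cosmetic deviation is your final appeal to Gronwall, which is admissible but unnecessary: since $T$ is fixed, the residual integral is simply bounded by $T\max_{[0,T]}\mathcal{E}(\kp)^{1/2}$, and the entire right-hand side is then absorbed by one Young inequality after taking the maximum in $t$ --- exactly the paper's ``straightforward manipulations.''
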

\begin{proof}
We take $\kq = \dot\kp = (\dot\bgam, \dot\bze)\in \mathfrak S_{\text{sym}}$ in the first equation of \eqref{varFormR1-varFormR2} and integrate the resulting identity over $(0, t)$ to obtain    
\[ 
\mathcal{E}\big(\kp\big)(t) - \mathcal{E}\big(\kp\big)(0) + \int_0^t (\omega\cV \dot\bze(s), \dot\bze(s) ) \, \text{d}s =   - \int_0^t \big( \bF(s),  \bdiv \dot\jmath_\omega^+\dot \kp(s) \big)_\rho \, \text{d}s + \int_0^t   \big<\ddot{\boldsymbol{g}}(s), \dot\jmath_\omega^+\dot \kp(s)\bn \big>_\Gamma\, \text{d}s.
\]
Next, we notice that the last term on the left-hand side is non-negative and we integrate by parts on the right-hand side  to find
\begin{align*}
	\mathcal{E}\big(\kp\big)(t)  - \mathcal{E}\big(\kp\big)(0)&
	\leq \int_0^t \big( \dot{\bF}(s), \bdiv\jmath_\omega^+\kp(s)\big)_\rho \, \text{d}s -  \big( \bF(t), \bdiv \jmath_\omega^+\kp(t)\big)_\rho + \big( \bF(0), \bdiv \jmath_\omega^+\kp_{0}\big)_\rho 
\\[1ex]
&
\quad - \int_0^t \big<\frac{\text{d}^3\boldsymbol{g} }{\text{d}t}(s),  \jmath_\omega^+ \kp(s)\bn \big>_\Gamma \, \text{d}s + \big<\ddot{\boldsymbol{g}}(t),  \jmath_\omega^+ \kp(t)\bn \big>_\Gamma - \big<\ddot{\boldsymbol{g}}(0),  \jmath_\omega^+ \kp_0\bn \big>_\Gamma
\end{align*}
By virtue of the Cauchy-Schwarz inequality, the Sobolev embeddings $\H^1(\bL^2(\Omega)) \hookrightarrow \cC^0(\bL^2(\Omega))$ and $\H^1(\bH^{1/2}(\Gamma)) \hookrightarrow \cC^0(\bH^{1/2}(\Gamma))$ (see \cite{Lions}), and the normal trace theorem we have that 
\begin{align}\label{estimN0}
\mathcal{E}\big(\kp\big)(t)   \lesssim 
 \|\bF\|_{\H^1(\bL^2(\Omega))} \max_{t\in [0,T]}\mathcal{E}\big(\kp\big)^{1/2}(t)  + \norm{ \boldsymbol{g} }_{\H^3(\bH^{1/2}(\Gamma))}  \max_{t\in [0,T]} \norm{\jmath_\omega^+ \kp(t)}_{\mathbb H(\bdiv, \Omega)} + \mathcal{E}\big(\kp\big)(0).
\end{align}
Moreover,  thanks to the triangle inequality, the identity $\kp(t) = \int_0^t \dot\kp(s)\, \text{d}t + \kp_0$ and \eqref{ellipA} we have that 
\begin{align*}
	\norm{\jmath_\omega^+ \kp(t)}_{\mathbb H(\bdiv, \Omega)} &\lesssim \norm{\jmath_\omega \kp}_{\mathfrak{L}_V^2(\Omega)} + \norm{\bdiv \jmath_\omega^+ \kp}_{0,\Omega} \\
	& \lesssim \norm{\jmath_\omega \dot \kp}_{\mathfrak{L}_V^2(\Omega)} + \norm{\bdiv \jmath_\omega^+ \kp}_{\mathfrak{L}_V^2(\Omega)} + \norm{\kp_0}_{\mathfrak{L}_V^2(\Omega)}
	\lesssim \mathcal{E}\big(\kp\big)^{1/2}(t) + \norm{\kp_0}_{\mathfrak{L}_V^2(\Omega)},
\end{align*}
and it is clear from the definition of $\mathcal{E}$ that $\mathcal{E}\big(\kp\big)(0) \lesssim \norm{\kp_{0}}^2_{\mathfrak S} + \norm{\jmath_\omega\kp_{1}}^2_{\mathfrak{L}_V^2(\Omega)}$. Using the last estimates in \eqref{estimN0} permits us to deduce, after straightforward manipulations,  that
\begin{equation}\label{interm}
\max_{t\in [0,T]}\mathcal{E}\big(\kp\big)^{1/2}(t) \lesssim \|f\|_{\H^1(\bL^2(\Omega))} +  \norm{ \boldsymbol{g} }_{\H^3(\bH^{1/2}(\Gamma))} + \norm{\kp_{0}}_{\mathfrak{S}}+ \norm{\jmath_\omega\kp_{1}}_{\mathfrak{L}_V^2(\Omega)},
\end{equation}
and the results follows.
\end{proof}

\begin{theorem}\label{theorem-R1-R2}
Assume that $\bF \in \H^1(\bL^2(\Omega))$ and $\boldsymbol{g}\in \H^3(\bH^{1/2}(\Gamma))$. Then,  problem \eqref{varFormR1-varFormR2} admits a unique solution. Moreover, there holds
\begin{align}\label{AprioriBound}
\begin{split}
\max_{t\in [0,T]}\|\kp(t)\|_{\mathfrak{S}} &+ \max_{t\in [0,T]}\|\jmath_\omega\dot \kp(t)\|_{\mathfrak{L}_V^2(\Omega)} \lesssim  \norm{\bF}_{\H^1(\bL^2(\Omega))} + \norm{ \boldsymbol{g} }_{\H^3(\bH^{1/2}(\Gamma))} + \|\kp_0\|_{\mathfrak{S}} + \|\jmath_\omega \kp_1\|_{\mathfrak{L}_V^2(\Omega)}.  
\end{split}
\end{align}
\end{theorem}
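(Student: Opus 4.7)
I would prove existence by the classical Faedo--Galerkin method for abstract second-order evolution equations in the spirit of \cite{Lions}, and then derive uniqueness (together with \eqref{AprioriBound}) directly from the energy estimate of the preceding lemma. Since $\mathfrak{S}_{\text{sym}}$ is separable (it is a closed subspace of a Hilbert space), pick a Hilbertian basis $\{\varphi_k\}_{k \ge 1}$ and set $V_n := \mathrm{span}\{\varphi_1, \dots, \varphi_n\}$. The Galerkin problem seeks $\kp_n(t) \in V_n$ satisfying \eqref{varFormR1-varFormR2} against every $\kq \in V_n$, with initial data obtained by projecting $(\kp_0, \kp_1)$ onto $V_n$. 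Expanding $\kp_n(t) = \sum_k c_k(t) \varphi_k$ turns this into a linear second-order ODE system whose mass matrix $[A(\jmath_\omega \varphi_i, \jmath_\omega \varphi_j)]_{i,j}$ is symmetric positive definite: indeed, combining the coercivity \eqref{ellipA} of $A$ with the bound $\omega \ge \omega_{\min} > 0$ on $\Omega_V$ shows that $\jmath_\omega$ is injective on $\mathfrak{L}^2_V(\Omega) \supset \mathfrak{S}_{\text{sym}}$. Classical ODE theory then delivers a unique global solution $\kp_n$.

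Next I would establish a uniform a priori estimate. Reproducing the energy argument of the preceding lemma inside the finite-dimensional subspace $V_n$ (where the test function $\kq = \dot\kp_n$ is admissible and all manipulations are fully rigorous) produces a bound on $\max_t \mathcal{E}(\kp_n)^{1/2}(t)$ depending only on $\bF$, $\boldsymbol{g}$, $\kp_0$ and $\kp_1$. The coercivity of $A$ then yields uniform bounds on $\jmath_\omega \dot\kp_n$ in $\L^\infty(\mathfrak{L}_V^2(\Omega))$ and on $\bdiv \jmath_\omega^+ \kp_n$ in $\L^\infty(\bL^2(\Omega))$, while the identity $\jmath_\omega \kp_n(t) = \jmath_\omega \kp_n(0) + \int_0^t \jmath_\omega \dot\kp_n(s)\, \text{d}s$ produces the corresponding bound on $\kp_n$ in $\L^\infty(\mathfrak{S}_{\text{sym}})$. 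This is already \eqref{AprioriBound} at the discrete level.

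Banach--Alaoglu then extracts a subsequence (not relabeled) converging weakly-$\ast$, say $\kp_n \stackrel{*}{\rightharpoonup} \kp$ in $\L^\infty(\mathfrak{S}_{\text{sym}})$ and $\jmath_\omega \dot\kp_n \stackrel{*}{\rightharpoonup} \jmath_\omega \dot\kp$ in $\L^\infty(\mathfrak{L}_V^2(\Omega))$. I would multiply the Galerkin identity by a scalar $\phi \in \cC^\infty_c(0,T)$, integrate in $t$, shift the $\text{d}/\text{d}t$ onto $\phi$ by an integration by parts, and pass to the limit using the density of $\cup_n V_n$ in $\mathfrak{S}_{\text{sym}}$. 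This produces a $\kp \in W(\mathfrak{S}_{\text{sym}}, \mathfrak{S}_{\text{sym}}')$ solving \eqref{varFormR1-varFormR2}; the initial conditions are recovered through the embedding $V(\mathfrak{S}_{\text{sym}}, \mathfrak{S}_{\text{sym}}') \hookrightarrow \cC^0(\mathfrak{L}^2_{\text{sym}}(\Omega))$ recalled above. Uniqueness reduces, by linearity, to showing that the homogeneous problem with zero initial data admits only the zero solution: the preceding lemma forces $\mathcal{E}(\kp) \equiv 0$ and hence $\jmath_\omega \dot\kp \equiv 0$; the injectivity of $\jmath_\omega$ on $\mathfrak{L}_V^2(\Omega)$ then gives $\dot\kp \equiv 0$ and therefore $\kp \equiv 0$.

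The main obstacle is the limit passage on the dissipative term $\tfrac{\text{d}}{\text{d}t} A(\pi_2 \kp_n, \jmath_\omega \kq)$: one must carry out the time integration by parts against $\phi$ \emph{before} invoking the weak convergence of $\kp_n$, and one must check afterwards that the limiting equation holds in $\L^2(0,T; \mathfrak{S}_{\text{sym}}')$, so that $\ddot\kp$ is indeed an element of $\L^2(\mathfrak{S}_{\text{sym}}')$ and the abstract setting $W(\mathfrak{S}_{\text{sym}}, \mathfrak{S}_{\text{sym}}')$ is meaningful for the limit.
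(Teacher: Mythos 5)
Your proposal is correct and takes essentially the same route as the paper: the paper likewise deduces \eqref{AprioriBound} from the energy estimate \eqref{AprioriBound_h} combined with the coercivity bound \eqref{eq-extra-1} and the identity $\kp(t)=\kp_0+\int_0^t\dot\kp(s)\,\text{d}s$, and then obtains existence and uniqueness by exactly the Faedo--Galerkin reduction, uniform energy bound, and weak-compactness limit passage you describe, referring to \cite[Chapter XVIII, Section 5]{Lions} for the details you spell out. Your filled-in details (the SPD mass matrix via the injectivity of $\jmath_\omega$ on $\mathfrak{L}_V^2(\Omega)$, the integration by parts against $\phi\in\cC^\infty_c(0,T)$ before the limit, and uniqueness via the homogeneous energy identity) are all consistent with the paper's intended argument.
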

\begin{proof}
We deduce from the definition of $\mathcal E$ and the coerciveness of $A$ (cf. \eqref{ellipA}) that 
\begin{equation}\label{eq-extra-1}
\|\jmath_\omega\dot\kq(t)\|^2_{\mathfrak{L}_V^2(\Omega)} +
\|\bdiv\jmath_\omega^+\kq(t)\|^2_{0,\Omega} \lesssim
\mathcal{E}\big(\kq\big)(t),\quad \forall \kq \in W(\mathfrak{S}_{\text{sym} }, \mathfrak{S}_{\text{sym} }').
\end{equation}
Hence, it follows from $\kp (t) = \int_0^t \dot \kp(s)\, \text{d}t + \kp_0$ and  \eqref{AprioriBound_h} that the formal a priori estimate  \eqref{AprioriBound} is satisfied. Now, the fact that $\mathfrak S_{\text{sym}}$ is a separable Hilbert space enables us to carry out a Galerkin finite dimensional space reduction, use  classical weak compactness results and obtain a solution of problem \eqref{varFormR1-varFormR2} through a limiting process. The proof of uniqueness is also standard. For the seek of brevity, we will not give here further details on the application of this classical procedure to problem \eqref{varFormR1-varFormR2}. Instead,   we refer to \cite[Chapter XVIII, Section 5]{Lions} for a detailed presentation on the application of the Galerkin method to an  abstract evolution problem of second order in time, in which our particular case fits entirely.  See also \cite{ggm-JSC-2017, gmm-2020} for similar strategies applied to mixed formulations in  elastodynamics and viscoelasticity, respectively. 
\end{proof}

The symmetry constraint integrated in $\mathfrak S_{\text{sym}}$ is difficult to handle from the numerical point of view. For this reason, we will relax this restriction by imposing it weakly  through the variational equation, 
\[
\big(\bs, \jmath_\omega^+ \kp(t)\big) = 0\quad \forall \bs \in \mathbb Q := \big\{\btau \in \mathbb{L}^2(\Omega):\ \btau = -\btau^{\t}\big\}.
\] 
A Lagrange multiplier $\br \in \mathbb{Q}$ will then make an appearance as a further variable in our formulation. To prove its existence we first  notice that due to the  embedding $\mathbb H(\bdiv, \Omega)\times \{\mathbf 0\} \hookrightarrow \mathfrak{S}$, the inf-sup condition satisfied (cf. \cite{abd}) by the bilinear form $\big(\btau, (\bs, \bv) \big)\mapsto (\bs,\btau) + (\bv,\bdiv \tau)$ for the pair $\{\mathbb H(\bdiv, \Omega),   \mathbb{Q} \times\bL^2(\Omega)\}$ (see \eqref{0infsup}) implies immediately that  there exists $\beta>0$ such that 
\begin{equation}\label{InfSupC}
\sup_{\kq \in \mathfrak S} 
\frac{(\bs,\jmath_\omega^+\kq) \,+\, \big(\bv,\bdiv \jmath_\omega^+\kq\big)}{\norm{\kq}_{\mathfrak S}}
\,\ge\, \beta \Big\{ \norm{\bs}_{0,\Omega} + \norm{\bv}_{0,\Omega} \Big\},\quad \forall (\bs,\bv) \in \mathbb{Q} \times\bL^2(\Omega).
\end{equation} 
Now, integrating \eqref{varFormR1-varFormR2} with respect to time we deduce that the functional 
 \begin{align*}
\mathcal{G}(t)\big(\kq\big)  := A\big(\jmath_\omega\dot \kp(t) &+  \pi_2\kp(t), \jmath_\omega\kq\big ) - A\big( \jmath_\omega\kp_{1} +  \pi_2 \kp_{0} , \jmath_\omega\kq\big)
 \\[1ex]
 & 
 + \int_0^t \big(\bF(s) + \bdiv \jmath_\omega^+\kp(s),\,  \bdiv \jmath_\omega^+\kq \big)_\rho \, \text{d}s - \int_0^t \big<\ddot{\boldsymbol{g}}(s), \jmath_\omega^+\kq\bn \big>_\Gamma \, \text{d}s.
\end{align*}
vanishes identically on the kernel $\mathfrak{S}_{\text{sym}}$ of the bilinear form $\mathfrak{S} \times \mathbb{Q} \ni \big(\kq, \br\big) \mapsto (\br,\jmath_\omega^+\kq)$. Therefore, the inf-sup condition \eqref{InfSupC} implies the existence of  a unique $\br\in \cC^0(\mathbb{Q})$ such that 
\begin{equation}\label{bRh}
 ( \br(t), \jmath_\omega^+\kq) = -\mathcal{G}(t)\big(\kq\big), \quad  
\forall\, t\in [0,T]\,, \quad \forall \, \kq \in \mathfrak{S}.
\end{equation}
Moreover, evaluating \eqref{bRh}  at $t=0$  and using again the  discrete inf-sup condition \eqref{InfSupC} we deduce the initial condition $\br(0) = \mathbf 0$. 
Differentiating now \eqref{bRh} in the sense of distributions on $(0, T)$ we conclude that  the pair $(\kp,\br) \in W(\mathfrak{S}_{\text{sym} }, \mathfrak{S}_{\text{sym} }')\times  \cC^0(\mathbb Q)$ is the unique solution of 
\begin{align}\label{varFormR1Extended}
\begin{split}
\frac{\text{d}}{\text{d}t}  \Big\{ A\big( \jmath_\omega \dot\kp + \pi_2 \kp, \jmath_\omega\kq \big)  +  (\br, \jmath_\omega^+\kq)\Big\}+ \big(\bdiv\jmath_\omega^+\kp , \bdiv\jmath_\omega^+\kq \big)_\rho
&= - \big(\bF , \bdiv\jmath_\omega^+\kq \big)_\rho + \big<\ddot{\boldsymbol{g}}, \jmath_\omega^+\kq\bn \big>_\Gamma,\quad \forall \kq \in \mathfrak{S}
\\[1ex]
 \big(\bs, \jmath_\omega^+ \kp(t)\big) &= 0, \quad \forall \bs\in \mathbb{Q},
 \end{split}
\end{align} 
that satisfies the initial conditions
\begin{align}\label{initial-R1-R2extended}
\begin{split}
\kp(0) &= \kp_0 \in \mathfrak{S}_{\text{sym}},
\quad  \dot\kp(0) = \kp_1 \in \mathfrak{L}_{\text{sym}}^2(\Omega), \quad \text{and}\quad \br(0) = \mathbf 0. 
\end{split}
\end{align} 

Finally, it follows from \eqref{InfSupC}, \eqref{bRh}, the Cauchy-Schwarz inequality and \eqref{AprioriBound} that, for all $t\in [0, T]$, 
\begin{align}\label{comb11C}
\begin{split}
\beta\,\norm{\br(t)}_{0,\Omega} &\leq \sup_{\kq\in \mathfrak{S}} \dfrac{ (\br(t),\jmath_\omega^+\kq)}{\norm{\kq}_{\mathfrak S}} = \sup_{\kq\in \mathfrak{S}} \frac{\mathcal{G}(t)\big(\kq\big)} {\norm{\kq}_{\mathfrak S}} \lesssim  
 \norm{\bF}_{\H^{1}(\bL^2(\Omega))} + \|\kp_{0}\|_{\mathfrak S} + \norm{ \boldsymbol{g} }_{\H^3(\bH^{1/2}(\Gamma))} + \norm{\jmath_\omega\kp_{1}}_{\mathfrak{L}_V^2(\Omega)}. 
\end{split}
\end{align}

The Lagrange multiplier $\br$ is usually known as the rotation. We can relate it to the velocity field as follows.
\begin{prop}
	If $\ddot \kp \in \L^2(\mathfrak L^2(\Omega))$ and $\dot\br\in \L^2(\mathbb Q)$, then $\ddot\bu\in \L^2(\bH^1(\Omega))$ and  
	\begin{equation}\label{rmeans}
		\br = \frac{1}{2}\big\{\nabla\dot\bu-(\nabla\dot\bu)^{\t}\big\} - \frac{1}{2}\big\{\nabla\bu_1-(\nabla\bu_1)^{\t}\big\}.
	\end{equation}
	\end{prop}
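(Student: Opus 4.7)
The plan is to recover the skew-symmetric part of $\nabla\ddot\bu$ from the Lagrange multiplier $\br$ by comparing the distributional time derivative of \eqref{varFormR1Extended} with a variational identity derived directly from the strong constitutive laws in \eqref{split123}, and then to integrate in time using $\br(0)=\mathbf 0$ to obtain \eqref{rmeans}.

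First I would establish the claimed regularity $\ddot\bu\in \L^2(\bH^1(\Omega))$. The momentum equation $\rho\ddot\bu = \bF + \bdiv\jmath_\omega^+\kp$ together with the standing hypotheses on $\bF$ and $\kp$ already places $\ddot\bu$ in $\L^2(\bL^2(\Omega))$. The key additional input is the pointwise constitutive identity $\cA\ddot\bgam = \beps(\ddot\bu)$ coming from the third row of \eqref{split123}, which combined with the hypothesis $\ddot\kp\in \L^2(\mathfrak{L}^2(\Omega))$ yields $\beps(\ddot\bu)\in \L^2(\mathbb{L}^2(\Omega))$. Since the boundary trace $\ddot\boldsymbol{g}$ lies in $\L^2(\bH^{1/2}(\Gamma))$ (this is ensured by $\boldsymbol{g}\in \H^3(\bH^{1/2}(\Gamma))$), a standard lifting of the Dirichlet data together with Korn's inequality upgrades $\ddot\bu$ to $\L^2(\bH^1(\Omega))$.

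Next I would test the third and fourth rows of \eqref{split123} against an arbitrary $\kq=(\bet,\btau)\in \mathfrak{S}$, this time \emph{without} invoking the symmetry of $\jmath_\omega^+\kq$, to get $A(\jmath_\omega\ddot\kp+\pi_2\dot\kp,\jmath_\omega\kq) = (\beps(\ddot\bu),\jmath_\omega^+\kq)$. Writing $(\beps(\ddot\bu),\jmath_\omega^+\kq) = (\nabla\ddot\bu,\jmath_\omega^+\kq) - \big(\tfrac{1}{2}(\nabla\ddot\bu-(\nabla\ddot\bu)^{\t}),\jmath_\omega^+\kq\big)$, integrating by parts in the first term (now legitimate thanks to $\ddot\bu\in\bH^1(\Omega)$ with trace $\ddot\boldsymbol{g}$) and substituting the momentum equation produces an identity of exactly the same shape as the distributional time derivative of \eqref{varFormR1Extended}, but with an extra term $-\big(\tfrac{1}{2}(\nabla\ddot\bu-(\nabla\ddot\bu)^{\t}),\jmath_\omega^+\kq\big)$. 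Comparison forces
\[
(\dot\br,\jmath_\omega^+\kq) = \big(\tfrac{1}{2}(\nabla\ddot\bu-(\nabla\ddot\bu)^{\t}),\jmath_\omega^+\kq\big)\qquad \forall\,\kq\in\mathfrak{S}.
\]
Because every $\btau\in\mathbb{H}(\bdiv,\Omega)$ equals $\jmath_\omega^+(\btau,\mathbf 0)$ with $(\btau,\mathbf 0)\in\mathfrak{S}$, and $\mathbb{H}(\bdiv,\Omega)$ is dense in $\mathbb{L}^2(\Omega)$, this forces $\dot\br = \tfrac{1}{2}(\nabla\ddot\bu-(\nabla\ddot\bu)^{\t})$ in $\mathbb{L}^2(\Omega)$ for a.e. $t$. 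Integrating from $0$ to $t$ and using $\br(0)=\mathbf 0$ together with $\dot\bu(0)=\bu_1$ then yields \eqref{rmeans}.

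The main technical obstacle is the $\bH^1$-regularity of $\ddot\bu$: the momentum equation controls the divergence of the stress (and hence $\ddot\bu$ itself), while the constitutive law controls only the symmetric gradient $\beps(\ddot\bu)$; a Korn-type argument that accommodates the non-homogeneous Dirichlet data is needed to combine these two pieces of information and to legitimize the integration by parts used in the identification of $\dot\br$.
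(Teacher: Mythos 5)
Your argument is correct, and its core is the same comparison the paper makes: subtracting the tested constitutive identity \eqref{const+} from the time-differentiated first equation of \eqref{varFormR1Extended} (after inserting $\ddot\bu=\rho^{-1}(\bF+\bdiv\jmath_\omega^+\kp)$ and $\ddot\bu|_\Gamma=\ddot{\boldsymbol{g}}$) to force $(\dot\br,\jmath_\omega^+\kq)=\big(\tfrac{1}{2}\big\{\nabla\ddot\bu-(\nabla\ddot\bu)^{\t}\big\},\jmath_\omega^+\kq\big)$ for all $\kq\in\mathfrak S$, then integrating in time with $\br(0)=\mathbf 0$ and $\dot\bu(0)=\bu_1$. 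Where you genuinely diverge is the regularity step, which you rightly single out as the technical obstacle: you establish $\ddot\bu\in\L^2(\bH^1(\Omega))$ \emph{first}, from $\beps(\ddot\bu)=\cA\ddot\bgam\in\L^2(\mathbb{L}^2(\Omega))$ and a Korn-type argument, and only then integrate by parts. For that you need the distributional form of Korn's inequality (on a Lipschitz domain, $\bv\in\bL^2(\Omega)$ with $\beps(\bv)\in\mathbb{L}^2(\Omega)$ implies $\bv\in\bH^1(\Omega)$, a consequence of Lions' lemma) --- a true but considerably heavier tool; moreover your appeal to a lifting of the Dirichlet data is superfluous and slightly circular, since before $\ddot\bu$ is known to lie in $\bH^1(\Omega)$ it has no trace to match against a lifting, while the distributional Korn statement needs no boundary information at all. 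The paper sidesteps Korn entirely: it tests with $\kq=(\bet,\mathbf 0)$, $\bet$ smooth and compactly supported in $\Omega$, so the comparison identity reduces to $(\dot\br+\beps(\ddot\bu),\bet)=-(\ddot\bu,\bdiv\bet)$, which directly identifies the distributional gradient $\nabla\ddot\bu=\dot\br+\beps(\ddot\bu)\in\L^2(\mathbb{L}^2(\Omega))$; regularity and the formula for $\dot\br$ drop out simultaneously, and this is exactly where the hypothesis $\dot\br\in\L^2(\mathbb Q)$ is used. The trade-off: the paper's route is more elementary and self-contained, whereas yours, at the price of Lions' lemma, exhibits the slightly stronger fact that $\ddot\bu\in\L^2(\bH^1(\Omega))$ already follows from $\ddot\kp\in\L^2(\mathfrak L^2(\Omega))$ alone, the multiplier hypothesis serving only to identify $\dot\br$. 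Your closing density argument (every $\btau\in\mathbb{H}(\bdiv,\Omega)$ arises as $\jmath_\omega^+(\btau,\mathbf 0)$ with $(\btau,\mathbf 0)\in\mathfrak S$, and $\mathbb{H}(\bdiv,\Omega)$ is dense in $\mathbb{L}^2(\Omega)$) is a valid substitute for the paper's use of compactly supported test tensors.
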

	\begin{proof}
		Using the identity $ \ddot \bu = \rho^{-1} \big( \bF + \bdiv \jmath_\omega^+ \kp\big)$ in the first equation of \eqref{varFormR1Extended} we get 
\[
 A\big( \jmath_\omega \ddot\kp + \pi_2 \dot\kp, \jmath_\omega\kq \big)  +  (\dot\br, \jmath_\omega^+\kq)+ \big(\ddot\bu , \bdiv\jmath_\omega^+\kq \big) = \big<\ddot{\bu}, \jmath_\omega^+\kq\bn \big>_\Gamma.
\]
By virtue of \eqref{const+}, $\beps(\ddot \bu)\in \L^2(\mathbb L^2(\Omega))$ and 
\[
  (\dot\br + \beps(\ddot \bu), \jmath_\omega^+\kq) = - \big(\ddot\bu ,\bdiv \jmath_\omega^+\kq \big) + \big<\ddot{\bu}, \jmath_\omega^+\kq\bn \big>_\Gamma\qquad \forall \kq \in \mathfrak S. 
\]
Choosing $\kq = (\bet, \mathbf 0)$, with $\bet\in [\cC^\infty(\Omega)]^{d\times d}$ supported in $\Omega$ we deduce that $\nabla \ddot\bu = \dot\br + \beps(\ddot \bu)$ and the result follows.
	\end{proof}

In contract to \eqref{varFormR1-varFormR2}, the variational formulation \eqref{varFormR1Extended}  imposes weakly the symmetry restriction on the stress tensor. This  well established strategy gives rise to mixed finite elements that are easy to implement and that require fewer degrees of freedom in comparison to the methods imposing strongly the symmetry at the discrete level. 

\section{Finite element spaces and auxiliary results}\label{section4}

We consider  shape regular meshes $\mathcal{T}_h$ that subdivide the domain $\bar \Omega$ into  triangles/tetrahedra $K$ of diameter $h_K$. The parameter $h:= \max_{K\in \cT_h} \{h_K\}$ represents the mesh size of $\cT_h$.  In what follows, we assume that $\mathcal{T}_h$ is compatible with the partition $\bar\Omega = \cup_{j= 1}^J \bar{\Omega}_j$, i.e., 
\[
\bigcup_{\substack{K\in \cT_h\\K\subset \Omega_j}} K  = \bar{\Omega}_j, \quad \forall j=1,\cdots, J.
\]

For any $s\geq 0$, we consider the broken Sobolev space 
\[
 \H^s(\cT_h):=
 \set{\bv \in \L^2(\Omega): \quad \bv|_K\in \H^s(K)\quad \forall K\in \cT_h},
\]
and define similarly the vectorial and tensorial versions $\mathbf H^s(\cT_h)$ and $\mathbb H^s(\cT_h)$, respectively. For each $\bv:=\set{\bv_K}\in \mathbf H^s(\cT_h)$ and $\btau:= \set{\btau_K}\in \mathbb H^s(\cT_h)$ the components $\bv_K$ and $\btau_K$  represent the restrictions $\bv|_K$ and $\btau|_K$. When no confusion arises, the restrictions of these functions will be written without any subscript.

Hereafter, given an integer $m\geq 0$ and a domain $D\subset \mathbb{R}^d$, $\cP_m(D)$ denotes the space of polynomials of degree at most $m$ on $D$. We introduce the space   
\[
 \cP_m(\cT_h) := \bigoplus_{K\in \cT_h} \cP_m(K) =\set{ v\in \L^2(\Omega): \ v|_K \in \cP_m(K),\ \forall K\in \cT_h }
 \]
 of piecewise polynomial functions relatively to $\cT_h$ and let 
 \[ \quad 
 \cP^V_m(\cT_h):=\set{v\in \cP_m(\cT_h):\ v|_{\Omega_E}=0}.
 \]
 For $k\geq 1$, we consider the finite element spaces
\begin{equation*}
	\mathbb{W}_h := [\cP_k(\cT_h)]^{ d\times d} \cap \mathbb H(\bdiv, \Omega), \quad 
	\mathbb{Q}_h := [\cP_{k-1}(\cT_h)]^{d\times d} \cap \mathbb{Q}, \quad \text{and} \quad \mathbf{U}_h := [\cP_{k-1}(\cT_h)]^{d}.
\end{equation*}
The triplet $\set{ \mathbb{W}_h, \mathbb{Q}_h, \mathbf{U}_h}$ constitutes the Arnold-Falk-Winther family introduced in \cite{ArnoldFalkWinther} for the steady elasticity problem. It is shown in \cite{ArnoldFalkWinther} that $\big(\btau, (\bs, \bv) \big)\mapsto (\bs,\btau) + (\bv,\bdiv \tau)$ satisfies a uniform inf-sup condition for the pair $\{\mathbb W_h,   \mathbb{Q}_h \times\mathbf U_h\}$. Therefore, if we let 
\[
\mathfrak{S}_h^{DG} := [\cP_{k}(\cT_h)]^{d\times d} \times [\cP_k^V(\cT_h)]^{ d\times d} 
\quad \text{and} \quad 
\mathfrak S_h:=\set{\kq\in \mathfrak{S}_h^{DG}:\quad \jmath_\omega^+\kq \in \mathbb{W}_h } \subset \mathfrak{S}, 
\]
the embedding $\mathbb W_h\times \{\mathbf 0\} \hookrightarrow \mathfrak{S}_h $ implies the existence of $\beta^*>0$, independent of $h$, such that 
\begin{equation}\label{discInfSup}
\sup_{\kq \in \mathfrak S_h} 
\frac{(\bs,\jmath_\omega^+\kq) \,+\, \big(\bv,\bdiv \jmath_\omega^+\kq\big)}{\norm{\kq}_{\mathfrak S}}
\,\ge\, \beta^* \Big\{ \norm{\bs}_{0,\Omega} + \norm{\bv}_{0,\Omega} \Big\},\quad \forall (\bs,\bv) \in \mathbb{Q}_h \times\mathbf{U}_h.
\end{equation} 

Given $K\in \cT_h$, we denote by $\Pi_K: \mathbb H^1(K) \to [\cP_k(K)]^{d\times d}$ the tensorial version   of the (local) BDM-interpolation operator and recall the following classical error estimate, see \cite[Proposition 2.5.4]{BoffiBrezziFortinBook}, 
\begin{equation}\label{asymp}
 \norm{\btau - \Pi_K \btau}_{0,K} \leq C h_K^{m} \norm{\btau}_{m,K} \quad \forall \btau \in  \mathbb H^m(K) \quad \text{with $1 \leq m \leq k+1$}.
\end{equation}
Moreover, thanks to the commutative property, if $\bdiv \btau \in \bH^m(K)$, then
\begin{equation}\label{asympDiv}
 \norm{\bdiv (\btau - \Pi_K \btau) }_{0,K} = \norm{\bdiv \btau -  U_K \bdiv \btau }_{0,K} 
 \leq C h_K^{m}  \norm{\bdiv\btau}_{m,K},
\end{equation} 
for   $0 \leq m \leq k$, where $U_K$ is the $\bL^2(K)$-orthogonal projection onto $[\cP_{k-1}(K)]^d$. Let us consider now the global interpolation operator $\Pi_h: \mathbb H^1(\cT_h) \to [\cP_{k}(\cT_h)]^{d\times d}$ defined piecewise by $(\Pi_h\btau)|_K := \Pi_K \btau_K $, for all $\btau=\set{\btau_K}\in \mathbb H^1(\cT_h)$. We point out that $\Pi_h: \mathbb H^1(\cT_h)\cap \mathbb H(\bdiv, \Omega) \to \mathbb{W}_h$ and 
\[
\bdiv \Pi_h \btau = U_h \bdiv \btau,\quad \forall \btau \in \mathbb H^1(\cT_h)\cap \mathbb H(\bdiv, \Omega),
\]
where $(U_h\bv)|_K = U_K(\bv|_K)$ for all $\bv\in \bL^2(\Omega)$. Consequently, 
$\boldsymbol{\Pi}_h\kq := (\Pi_h \bet, \Pi_h\btau)\in \mathfrak S_h$ for all $\kq=(\bet,\btau) \in [\mathbb H^1(\cT_h)]^2\cap \mathfrak S$ since $\jmath_\omega^+ \boldsymbol{\Pi}_h\kq = \Pi_h (\bet + \omega \btau) \in \mathbb W_h$. Moreover, we deduce from \eqref{asymp} and \eqref{asympDiv} that if  $\kq \in [\mathbb H^m(\cT_h)]^2\cap \mathfrak S$ and $\jmath_\omega^+ \kq \in \bbH^m(\cT_h)$ then 
\begin{equation}\label{interpEs}
	\norm{\kq - \boldsymbol{\Pi}_h \kq}_{\mathfrak S} \lesssim C h^m \big( \norm{\kq}_{ [\mathbb H^m(\cT_h)]^2} + \norm{\bdiv \jmath_\omega^+ \kq}_{\bbH^m(\cT_h)} \big),
\end{equation}
where $\norm{\btau}^2_{\bbH^m(\cT_h)}:=\sum_{K\in \cT_h} \norm{\btau_K}^2_{m,K}$ for all $\btau \in \bbH^m(\cT_h)$.


We will now construct an auxiliary operator $\varXi_h:\ \mathfrak{S} \to \mathfrak{S}_h$ that will play a central role in our analysis.  It is defined by $\varXi_h\kp := \widetilde\kp_h$, where $(\widetilde\kp_h, \widetilde\br_h, \widetilde\bu_h)\in \mathfrak{S}_h \times \mathbb{Q}_h \times \mathbf U_h$ is the solution of 
\begin{align}\label{Xih}
\begin{split}
\big(\widetilde \kp_h, \widetilde \kq\big) + (\widetilde\br_h, \jmath_\omega^+\kq) 
+ ( \widetilde\bu_h, \bdiv \jmath_\omega^+\kq )_\rho  &= \big( \kp , \kq\big),\quad \forall \kq \in \mathfrak{S}_h
\\[1ex]
(\bs, \jmath_\omega^+\widetilde\kp_h) + (\bv,\bdiv \jmath_\omega^+ \widetilde \kp_h)_\rho &= (\bs, \jmath_\omega^+\kp) + \big(\bv, \bdiv \jmath_\omega^+\kp \big)_\rho, \quad  \forall (\bs, \bv) \in \mathbb{Q}_h\times \mathbf U_h.
\end{split}
\end{align}
It is important to notice that  the kernel of the bilinear form $\mathfrak{S}_h\times \mathbb{Q}_h \ni (\kq, \bs) \mapsto (\bs , \jmath_\omega^+\kq)$, namely 
\[
\mathfrak{S}_{\text{sym},h} := \big\{ \kq\in \mathfrak{S}_h:\ (\bs,\jmath_\omega^+ \kq) = 0 \quad \forall \,\bs \in \mathbb{Q}_h \big\},
\]
is not a subspace of $\mathfrak{S}_{\text{sym}}$. We denote by $\mathfrak{K}:= \big\{ \kq\in \mathfrak{S}_{\text{sym}}: \  \bdiv \jmath_\omega^+\kq = \mathbf 0 \big\}$ the kernel of the bilinear form $\mathfrak S \times (\mathbb Q \times \mathbf L^2(\Omega)) \ni \big(\kq, (\bs, \bv) \big)\mapsto (\bs,\jmath_\omega^+\kq) + (\bv,\bdiv \jmath_\omega^+\kq)$. It discrete counterpart is given by $\mathfrak{K}_{h} := \big\{ \kq\in \mathfrak{S}_{\text{sym,h}}:\  \bdiv \jmath_\omega^+\kq = \mathbf 0 \big\}$. The inf-sup condition \eqref{InfSupC} and the fact that $(\kq,\kq) = \norm{\kq}_{0,\Omega}^2 = \norm{\kq}^2_{\mathfrak S}$ for all $\kq \in \mathfrak{K}$ permits us to apply the Babu\v{s}ka-Brezzi theory to prove that the continuous counterpart of problem \eqref{Xih} is well-posed. Its unique solution is easily seen to be $(\kp, \mathbf 0, \mathbf 0) \in \mathfrak S \times \mathbb Q \times \mathbf L^2(\Omega)$. In turn, noting that we also have $(\kq,\kq) = \norm{\kq}^2_{\mathfrak S}$ for all $\kq \in \mathfrak{K}_h$, and employing now the inf-sup condition \eqref{discInfSup} and the discrete Babu\v{s}ka-Brezzi theory, we deduce that problem \eqref{Xih} is well-posed uniformly in $h$. Therefore, C\'ea's estimate between $(\kp, \mathbf 0, \mathbf 0)$ and $(\widetilde\kp_h, \widetilde\br_h, \widetilde\bu_h)$ implies the following approximation property for $\varXi_h$:
\begin{equation}\label{XihStab}
	\norm{\kp - \varXi_h\kp}_{\mathfrak S} \lesssim
\inf_{\kq_h\in \mathfrak{S}_h}\norm{\kp-\kq_h}_{\mathfrak{S}} \quad \forall \kp\in \mathfrak S.
\end{equation}
Moreover,  taking into account  the compatibility of $\mathcal{T}_h$ with the partition $\bar\Omega = \cup_{j= 1}^J \widehat{\Omega}_j$, it turns out that  $\varXi_h$ satisfies by construction the commuting diagram property 
\begin{equation}\label{div0}
\tfrac{1}{\rho}\bdiv \jmath_\omega^+(\varXi_h\kp) = U_h (\tfrac{1}{\rho} \bdiv\jmath_\omega^+\kp) , \quad \forall \kp \in \mathfrak S.
\end{equation}

We end this section by introducing notations related to DG approximations of $\H(\text{div})$-type spaces. We say that a closed subset $F\subset \overline{\Omega}$ is an interior edge/face if $F$ has a positive $(d-1)$-dimensional measure and if there are distinct elements $K$ and $K'$ such that $F =\bar K\cap \bar K'$. A closed subset $F\subset \overline{\Omega}$ is a boundary edge/face if there exists $K\in \cT_h$ such that $F$ is an edge/face of $K$ and $F =  \bar K\cap \partial \Omega$. We consider the set $\cF_h^0$ of interior edges/faces, the set $\cF_h^\partial$ of boundary edges/faces, and let  $\cF_h := \cF_h^0\cup \cF_h^\partial$. For any element $K\in \cT_h$, we introduce the set 
 \[
 \cF(K):= \set{F\in \cF_h:\quad F\subset \partial K} 
 \]
 of edges/faces composing the boundary of $K$.


 We will need the space given on the skeletons of the triangulations $\cT_h$  by $\L^2(\cF^0_h):= \bigoplus_{F\in \mathcal{F}^0_h} \L^2(F)$. Its vector valued version is denoted $\bL^2(\cF^0_h):= [\L^2(\cF^0_h)]^d$. Here again, the components $\bv_F$ of $\bv := \set{\bv_F}\in \bL^2(\cF^0_h)$  coincide with the restrictions $\bv|_F$.  We endow $\bL^2(\cF^0_h)$ with the inner product 
\[
(\bu, \bv)_{\cF^0_h} := \sum_{F\in \cF^0_h} \int_F \bu_F\cdot \bv_F\quad \forall \bu,\bv\in \bL^2(\cF^0_h)
\]
and denote the corresponding norm $\norm{\bv}^2_{0,\cF^0_h}:= (\bv,\bv)_{\cF^0_h}$. From now on, $h_\cF\in \L^2(\cF^0_h)$ is the piecewise constant function defined by $h_\cF|_F := h_F$ for all $F \in \cF^0_h$ with $h_F$ denoting the diameter of edge/face $F$. 

Given  $\bv\in \mathbf H^s(\cT_h)$ and $\btau\in \mathbb H^s(\cT_h)$, with $s>1/2$, we define averages $\mean{\bv}\in \bL^2(\cF^0_h)$ and jumps $\jump{\btau}\in \bL^2(\cF^0_h)$ by
\[
 \mean{\bv}_F := (\bv_K + \bv_{K'})/2 \quad \text{and} \quad \jump{\btau}_F := 
 \btau_K \bn_K + \btau_{K'}\bn_{K'} 
 \quad \forall F \in \cF(K)\cap \cF(K'),
\]
where $\bn_K$ is the outward unit normal vector to $\partial K$. 

Finally, we recall the following discrete trace inequality. 
\begin{prop}\label{card} 
There exists a constant $C_{\text{tr}}>0$ independent of $h$ such that 
 \begin{equation}\label{discTrace}
  \norm{h^{1/2}_{\cF}\mean{v}}_{0,\cF^0_h}\leq C_{\text{tr}} \norm{v}_{0,\Omega}\quad \forall  v\in \cP_k(\cT_h). 
 \end{equation}
\end{prop}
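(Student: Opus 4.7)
The plan is to reduce the global estimate to a sum of purely local discrete trace inequalities on each element of $\cT_h$, using shape regularity of the mesh to control the resulting constants uniformly in $h$.

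First I would unfold the definition of the average. For any interior face $F\in \cF^0_h$ shared by $K$ and $K'$, the pointwise bound $|\mean{v}_F|^2 \leq \tfrac{1}{2}(|v_K|^2 + |v_{K'}|^2)$ together with $h_F\leq \min(h_K, h_{K'})$ yields
\[
\norm{h^{1/2}_\cF \mean{v}}^2_{0,\cF^0_h} \leq \frac{1}{2}\sum_{K\in \cT_h} \sum_{F \in \cF(K)} h_F \norm{v_K}^2_{0,F}.
\]
Thus the problem is reduced to estimating $h_F \|v_K\|_{0,F}^2$ for every pair $(K,F)$ with $F\in \cF(K)$.

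Next I would invoke the classical local discrete trace inequality for polynomials: on any element $K\in \cT_h$ and any of its faces $F$, there exists a constant $C_k$ depending only on the polynomial degree $k$, on $d$, and on the shape-regularity parameter of $\cT_h$, such that $\norm{w}_{0,F}^2 \leq C_k\, h_K^{-1}\, \norm{w}_{0,K}^2$ for all $w\in \cP_k(K)$. The standard proof passes through a reference element via an affine map and uses the equivalence of norms on the finite-dimensional space $\cP_k(\widehat K)$, combined with the bounds $|F|\lesssim h_K^{d-1}$ and $|K|\gtrsim h_K^d$ that follow from shape regularity. Applying this to $v_K\in \cP_k(K)$ and using $h_F\lesssim h_K$ gives
\[
h_F \norm{v_K}^2_{0,F} \lesssim \norm{v_K}^2_{0,K}.
\]

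Finally, I would sum over all $K\in \cT_h$ and all $F\in \cF(K)$, using that each element has at most $d+1$ faces, to obtain
\[
\norm{h^{1/2}_\cF \mean{v}}^2_{0,\cF^0_h} \lesssim \sum_{K\in \cT_h} \norm{v_K}^2_{0,K} = \norm{v}^2_{0,\Omega},
\]
which is the desired bound. I do not expect any serious obstacle here: the only point deserving care is keeping track of the shape-regularity constant when asserting $h_F\lesssim h_K$ and when deriving the local scaled trace inequality on the reference element, and checking that boundary faces are simply excluded from the sum so that no extra boundary-term handling is required.
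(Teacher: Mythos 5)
Your proof is correct; note that the paper does not prove this proposition internally but simply cites \cite{conThan} (Proposition 4.1 there), and your argument --- the pointwise bound $|\mean{v}_F|^2\leq \tfrac12(|v_K|^2+|v_{K'}|^2)$, the local scaled trace inequality $\norm{w}_{0,F}^2\lesssim h_K^{-1}\norm{w}_{0,K}^2$ for $w\in\cP_k(K)$ obtained by reference-element scaling and finite-dimensional norm equivalence, and summation using the bounded number of faces per element --- is precisely the standard argument underlying that citation. One minor simplification: $h_F\leq h_K$ holds trivially because $F\subset\bar K$, so shape regularity enters only through the local trace inequality, and your inclusion of boundary faces in the elementwise sum is harmless since it only adds nonnegative terms.
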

\begin{proof}
See \cite[Proposition 4.1]{conThan}.
\end{proof}

\section{The semi-discrete CG problem and its convergence analysis}
We consider the following semi-discrete counterpart of  \eqref{varFormR1-varFormR2}: Find $\kp_h\in \cC^1(\mathfrak{S}_h)$ and $\br_h\in \cC^0(\mathbb{Q}_h)$ solving 
\begin{align}\label{varFormR1-varFormR2-hc}
\begin{split}
\dfrac{\text{d}}{\text{d}t}  \Big\{ A\big( \jmath_\omega\dot\kp_h + \pi_2\kp_h , \jmath_\omega\kq\big)  + (\br_h,\jmath_\omega^+\kq)  \Big\}
+ \big( \bdiv\jmath_\omega^+\kp_h + \bF,  \bdiv \jmath_\omega^+ \kq \big)_\rho & =   \big<\ddot{\boldsymbol{g}}, \jmath_\omega^+\kq\bn \big>_\Gamma, 
\quad \forall \kq\in \mathfrak{S}_h
\\[1ex]
 (\bs, \jmath_\omega^+\kp_h)  &=  0 \quad \forall \bs\in \mathbb{Q}_h. 
 \end{split}
 \end{align}

The elliptic projector $\varXi_h:\mathfrak S\to \mathfrak S_h$ introduced in Section~\ref{section4} will allow us to apply standard techniques of error analysis to our scheme. From now on we assume that $\kp_1\in \mathfrak{S}_{\text{sym}}$ and we consider  a solution $(\kp_h(t),\br_h(t))$ of problem \eqref{varFormR1-varFormR2-hc} started up with the initial conditions 
\begin{equation}\label{initial-R1-R2-h*c}
\kp_h(0)= \varXi_h\kp_0,
\quad  \dot\kp_h(0) = \varXi_h\kp_1, \quad \br_h(0) = \mathbf 0.
\end{equation}
In this way, the projected errors $\be_{\kp,h}(t) := \varXi_h\kp(t) - \kp_h(t)$ and $\be_{r,h}(t) := Q_h\br(t) - \br_h(t)$  satisfy by construction vanishing initial conditions: 
\begin{equation}\label{inti0c}
\be_{\kp, h}(0)=(\mathbf 0,\mathbf 0)\quad  \dot{\be}_{\kp, h}(0)= (\mathbf 0,\mathbf 0), \quad \text{and} \quad \be_{\br,h}(0) = \mathbf{0}.
\end{equation}
Moreover, by definition of $\varXi_h$ and due to the second equations of \eqref{varFormR1-varFormR2} 
and \eqref{varFormR1-varFormR2-hc}, it turns out that 
\begin{equation}\label{skew0c}
	\big(\bs, \jmath_\omega^+\be_{\kp, h}(t)\big) = \big(\bs, \jmath_\omega^+\kp(t)\big) - \big(\bs, \jmath_\omega^+\kp_h(t)\big)= 0,\quad \forall \bs \in 
	\mathbb{Q}_h.
\end{equation}  
\begin{theorem}\label{errorEc} 
Assume that the solution of problem \eqref{varFormR1-varFormR2} satisfies  the regularity assumptions $\kp\in \cC^2(\mathfrak{S})$ and $\br\in \cC^1(\mathbb Q)$. Then, the following error estimate holds 
\begin{equation}
\begin{array}{c}
\max_{t\in [0, T]}\norm{(\kp-\kp_h)(t)}_{\mathfrak{S}} 
\,+\, \max_{t\in [0, T]}\norm{\jmath_\omega (\dot\kp-\dot\kp_h)(t)}_{\mathfrak{L}_V^2(\Omega)} + \max_{t\in [0, T]}\norm{ (\br - \br_h)(t)}_{0,\Omega} 
\\[2ex]
 \lesssim \, \norm{\kp - \varXi_h\kp}_{\W^{2,\infty}(\mathfrak{S})}+
\norm{\br - Q_h\br}_{\W^{1,\infty}(\mathbb{L}^2(\Omega))}.
\end{array}
\end{equation}
\end{theorem}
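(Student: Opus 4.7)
The strategy is the classical elliptic-projection based convergence analysis. I split the errors
\begin{equation*}
\kp - \kp_h = \bchi_\kp + \be_{\kp,h},\qquad \br - \br_h = \bchi_\br + \be_{\br,h},
\end{equation*}
with $\bchi_\kp := \kp - \varXi_h\kp$, $\be_{\kp,h} := \varXi_h\kp - \kp_h$, $\bchi_\br := \br - Q_h\br$, and $\be_{\br,h} := Q_h\br - \br_h$, where $Q_h$ denotes the $\mathbb L^2$-orthogonal projection onto $\mathbb Q_h$. Subtracting \eqref{varFormR1-varFormR2-hc} from \eqref{varFormR1Extended} tested against $\kq \in \mathfrak S_h \subset \mathfrak S$ and invoking the commuting diagram property \eqref{div0} to annihilate $(\bdiv\jmath_\omega^+\bchi_\kp,\bdiv\jmath_\omega^+\kq)_\rho$ for every $\kq \in \mathfrak S_h$, I obtain an error equation in which $(\be_{\kp,h},\be_{\br,h})$ is driven solely by the projection residuals $\bchi_\kp$ and $\bchi_\br$. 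It is supplemented by the orthogonality $(\bs,\jmath_\omega^+\be_{\kp,h}(t))=0$ for all $\bs \in \mathbb Q_h$ (cf.~\eqref{skew0c}) and the vanishing initial data \eqref{inti0c}.

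Next, I differentiate the error equation in $t$ — legitimate under the regularity $\kp \in \cC^2(\mathfrak S)$, $\br \in \cC^1(\mathbb Q)$ — and test it with $\kq = \dot\be_{\kp,h}(t) \in \mathfrak S_h$. The crucial observation is that differentiating \eqref{skew0c} in time yields $(\bs,\jmath_\omega^+\dot\be_{\kp,h}(t))=0$ for all $\bs \in \mathbb Q_h$, so that the coupling term $(\dot\be_{\br,h},\jmath_\omega^+\dot\be_{\kp,h})$ (with $\dot\be_{\br,h} \in \mathbb Q_h$) drops out. The symmetry of $A$ then converts the remaining left-hand side into $\tfrac{\text d}{\text dt}\mathcal E_{\be,h}(t) + (\omega\cV\dot\bze_{\be,h},\dot\bze_{\be,h})$, where $\mathcal E_{\be,h}$ is the analogue of \eqref{defE} for $\be_{\kp,h}$ and the second term is a nonnegative dissipation I discard. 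Integrating over $(0,t)$, using the Cauchy--Schwarz inequality, the continuity of $A$, the elementary bound $\|\jmath_\omega^+\kq\|_{0,\Omega} \le \sqrt 2\,\|\jmath_\omega\kq\|_{\mathfrak L^2_V(\Omega)}$, and the coercivity \eqref{ellipA}, I arrive at an inequality of the form
\begin{equation*}
\mathcal E_{\be,h}(t) \,\lesssim\, \int_0^t \bigl( \|\ddot\bchi_\kp(s)\|_{\mathfrak S} + \|\dot\bchi_\kp(s)\|_{\mathfrak S} + \|\dot\bchi_\br(s)\|_{0,\Omega}\bigr)\, \mathcal E_{\be,h}^{1/2}(s)\, \text ds.
\end{equation*}
A standard quadratic Gr\"onwall-type lemma ($\phi^2 \leq \int_0^t g\phi$ implies $\max \phi \leq \int_0^T g$) then delivers the estimate $\max_{[0,T]} \mathcal E_{\be,h}^{1/2} \lesssim \|\bchi_\kp\|_{\W^{2,\infty}(\mathfrak S)} + \|\bchi_\br\|_{\W^{1,\infty}(\mathbb L^2(\Omega))}$.

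To complete the argument, I recover the full $\mathfrak S$-norm of $\be_{\kp,h}$ from the energy by writing $\jmath_\omega\be_{\kp,h}(t) = \int_0^t \jmath_\omega\dot\be_{\kp,h}(s)\,\text ds$ (valid thanks to $\be_{\kp,h}(0)=\mathbf 0$), whose $\mathfrak L^2_V$-norm is controlled by $T\max_s\mathcal E_{\be,h}^{1/2}$, and combining with the divergence bound already packaged in $\mathcal E_{\be,h}(t)$. For the rotation error, I integrate the error equation on $(0,t)$, isolate $(\be_{\br,h}(t),\jmath_\omega^+\kq)$ on one side, bound every remaining contribution by $\mathcal E_{\be,h}^{1/2}(t)$ plus time integrals of $\bchi$-terms, and then apply the discrete inf-sup condition \eqref{discInfSup} to obtain a pointwise control on $\|\be_{\br,h}(t)\|_{0,\Omega}$. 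Triangle inequalities with $\bchi_\kp$ and $\bchi_\br$ close the estimate. The main technical point is the coordinated use of the two structural properties \eqref{div0} and \eqref{skew0c}, together with the symmetry of $A$, to simultaneously eliminate the cross-terms $(\bdiv\jmath_\omega^+\bchi_\kp,\bdiv\jmath_\omega^+\kq)_\rho$ and $(\dot\be_{\br,h},\jmath_\omega^+\dot\be_{\kp,h})$: this is what makes the energy identity close cleanly despite the weakly-imposed symmetry of the stress tensor.
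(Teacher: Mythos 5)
Your proposal is correct and follows essentially the same route as the paper's proof: the elliptic projector $\varXi_h$ together with \eqref{div0} and $\bdiv\jmath_\omega^+\mathfrak S_h\subset\mathbf U_h$ to remove the divergence residual, the differentiated orthogonality \eqref{skew0c} to cancel the rotation coupling when testing with $\dot\be_{\kp,h}$, an energy/quadratic-Gr\"onwall bound (the paper writes it as $\dot{\mathcal E}/(2\sqrt{\mathcal E})\lesssim g$ and integrates, which is the same device), recovery of $\be_{\kp,h}$ via $\be_{\kp,h}(t)=\int_0^t\dot\be_{\kp,h}$, time-integration of the error identity plus the discrete inf-sup condition \eqref{discInfSup} for $\be_{\br,h}$, and a final triangle inequality. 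No gaps to report.
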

\begin{proof}
We first observe that,  because of the regularity assumptions on $\kp$ and $\br$, we have
\begin{equation}\label{idempoic}
\begin{array}{l}
 
\frac{\text{d}}{\text{d}t} Q_h\br(t) = Q_h \dot\br(t) \quad\text{and} \quad  \frac{\text{d}^i}{\text{d}t^i}\varXi_h\kp(t) = \varXi_h\frac{\text{d}^i}{\text{d}t^i}\kp(t),  \quad \forall\, i \in \{1,2\}\,, \quad \forall\, t\in [0,T].
\end{array}
\end{equation} 
Then, using that the scheme \eqref{varFormR1-varFormR2-hc} is consistent with  \eqref{varFormR1-varFormR2}, and keeping in mind \eqref{div0} together with the fact that $\bdiv \jmath_\omega^+ \mathfrak S_h \subset \mathbf U_h$, we readily find  
\begin{align}\label{errorIdentityc}
\begin{split}
A\big( \jmath_\omega\ddot{\be}_{\kp, h} +  \pi_2\dot{\be}_{\kp, h}, \jmath_\omega\kq\big)  + (\dot{\be}_{r,h}, \jmath_\omega^+ \kq)   
+ \big(\bdiv \jmath_\omega^+ \be_{\kp,h}(t),  \bdiv\jmath_\omega^+\kq \big)_\rho&= F(\kq), \quad \forall \kq \in \mathfrak S_h,
\end{split}
\end{align}
with
\[
F\big(\kq\big):=  A\big( \jmath_\omega (\varXi_h \ddot\kp - \ddot\kp) + \pi_2(\varXi_h \dot\kp - \dot\kp), \jmath_\omega\kq\big)  +(Q_h\dot{\br} - \dot{\br}, \jmath_\omega^+\kq).
\]
Next, choosing  $\kq = \dot{\be}_{\kp, h}(t)$ in  \eqref{errorIdentityc} and taking into account \eqref{skew0c}, we deduce that 
\begin{equation}
	\dot{\mathcal{E}}\big({\be}_{\kp, h}\big)(t) + (\omega \cV\pi_2\dot{\be}_{\kp, h},  \pi_2\dot{\be}_{\kp, h}  )
	  = F(\dot{\be}_{\kp, h}).
\end{equation}
Hence,  as the second term on the left-hand side is nonnegative, the Cauchy-Schwarz inequality combined with \eqref{eq-extra-1}  give   
\[
\dfrac{\dot{\mathcal{E}}\big({\be}_{\kp, h}\big)}{2\sqrt{{\mathcal{E}}\big({\be}_{\kp, h}\big)}}
 \lesssim   \sum_{i=1,2}\norm{ \frac{\text{d}^i\kp}{\text{d}t^i}  - \varXi_h\frac{\text{d}^i\kp}{\text{d}t^i}}_{\mathfrak{L}_V^2(\Omega)} + 
 \norm{\dot{\br} - Q_h\dot{\br}}_{0,\Omega},
\]
and integrating with respect to time  we arrive at 
\begin{equation}\label{8.1c}
\max_{t\in [0, T]}{\mathcal{E}}\big({\be}_{\kp, h}\big)^{1/2}(t) \lesssim  \norm{\kp-\varXi_h\kp}_{\W^{2,\infty}(\mathfrak{L}_V^2(\Omega))} + \norm{\br - Q_h\br}_{\W^{1,\infty}(\mathbb{L}^2(\Omega))}.
\end{equation}
It follows now from  \eqref{eq-extra-1} and $\be_{\kp, h}(t) = \int_0^t \dot{\be}_{\kp, h}(s)\text{d}s$ that 
\begin{equation}\label{har1c}
\max_{t\in [0, T]}\norm{ {\be}_{\kp, h}(t) }_{\mathfrak S} 
+ \max_{t\in [0, T]}\norm{\jmath_\omega\dot \be_{\kp, h}(t) }_{\mathfrak{L}_V^2(\Omega)} \lesssim \norm{\kp-\varXi_h\kp}_{\W^{2,\infty}(\mathfrak{L}_V^2(\Omega))} + \norm{\br - Q_h\br}_{\W^{1,\infty}(\mathbb{L}^2(\Omega))}.
\end{equation}

In order to estimate the error in the rotation $\br$ we notice that 
integrating once with respect to time in  \eqref{errorIdentityc} we obtain
\begin{align}\label{errorIdentityr0c}
\begin{split}
(\be_{\br,h}, \jmath_\omega^+\kq) &= -
A\big( \jmath_\omega \dot{\be}_{\kp,h}  + \pi_2 \be_{\kp,h}, \jmath_\omega\kq\big)   
-  \int_0^t \big(\bdiv\jmath_\omega^+\be_{\kp,h}(s),  \bdiv\jmath_\omega^+\kq \big)_\rho\, \text{d}s,
\\[1ex]
&\quad -A\big( \jmath_\omega(\dot\kp - \varXi_h \dot\kp) + \pi_2 (\kp - \varXi_h\kp), \jmath_\omega\kq\big) + A\big( \jmath_\omega(\kp_1 - \varXi_h \kp_1) + \pi_2 (\kp_0 - \varXi_h\kp_{0}), \jmath_\omega\kq\big) 
\\[1ex]
&\quad - (\br - Q_h\br, \jmath_\omega^+\kq),\quad \forall \kq \in \mathfrak S_h.
\end{split}
\end{align}
Therefore, the inf-sup condition \eqref{discInfSup}, identity \eqref{errorIdentityr0c},  the Cauchy-Schwarz inequality, \eqref{contA} and \eqref{har1c} yield  
\begin{equation}\label{hartc}
	\beta^*\norm{\be_{\br,h} }_{0,\Omega} \leq \sup_{\kq\in \mathfrak S_h}
\dfrac{(\be_{\br,h}, \jmath_\omega^+\kq )}{\norm{\kq}_{\mathfrak S}}
\lesssim 
\norm{\kp-\varXi_h\kp}_{\W^{2,\infty}(\mathfrak{L}_V^2(\Omega))} + \norm{\br - Q_h\br}_{\W^{1,\infty}(\mathbb{L}^2(\Omega))}.
\end{equation}
Finally, the splittings  $\kp - \kp_h = (\kp - \varXi_h\kp) + \be_{\kp, h}$ and $\br - \br_h = (\br - Q_h\br) + \be_{\br, h}$ of each component the error, the triangle inequality, together with   \eqref{hartc} and \eqref{har1c} give  
\begin{align}\label{eqBbc}
\begin{split}
\max_{[0, T]}\norm{\kp - \kp_h}_{\mathfrak{S}} 
+ \max_{[0, T]}\norm{\jmath_\omega(\dot\kp-\dot\kp_h)}_{\mathfrak{L}_V^2(\Omega)} + \max_{t\in [0, T]}\norm{\br-\br_h}_{0,\Omega} 
\\[1ex]
\lesssim \,\norm{\kp - \varXi_h\kp}_{\W^{2,\infty}(\mathfrak{S})}
 + \norm{\br - Q_h\br}_{\W^{1,\infty}(\mathbb{L}^2(\Omega))},
\end{split}
\end{align}
and the result follows.
\end{proof}

\begin{corollary}\label{coro1c}
Assume that the solution of problem \eqref{varFormR1-varFormR2}  is such that  $\kp \in \cC^2(\mathbb{H}^{k}(\Omega)^{2})$, $\bdiv \kp^+ \in \cC^2(\bH^{k}(\Omega))$ and $\br\in \cC^1(\mathbb{H}^{k}(\Omega))$. Then there holds
\begin{equation}\label{asympSDc}
\max_{[0, T]}\norm{\jmath_\omega(\dot\kp-\dot\kp_h)(t)}_{\mathfrak{L}_V^2(\Omega)} + \max_{[0, T]}\norm{\kp(t)-\kp_h(t)}_{\mathfrak S} +\norm{\br-\br_h}_{\W^{1,\infty}(\mathbb{L}^2(\Omega))}   \lesssim  h^k.
\end{equation}
\end{corollary}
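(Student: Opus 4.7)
The plan is to apply Theorem~\ref{errorEc} and then to bound each of the two projection errors appearing on its right-hand side by $h^k$ under the stated regularity hypotheses. Since the theorem reduces the error analysis to
\[
\norm{\kp - \varXi_h\kp}_{\W^{2,\infty}(\mathfrak{S})}\quad\text{and}\quad \norm{\br - Q_h\br}_{\W^{1,\infty}(\mathbb{L}^2(\Omega))},
\]
the corollary follows once each of these quantities is shown to be $\mathcal{O}(h^k)$.

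First, I would handle the projection error on $\kp$. The commutation identity \eqref{idempoic} yields $\frac{\text{d}^i}{\text{d}t^i}\varXi_h\kp = \varXi_h \frac{\text{d}^i\kp}{\text{d}t^i}$ for $i=0,1,2$, so by the regularity hypothesis $\kp\in\cC^2(\mathbb{H}^k(\Omega)^2)$ and $\bdiv \jmath_\omega^+\kp\in\cC^2(\bH^k(\Omega))$, it suffices to prove that for each fixed time level,
\[
\norm{\tfrac{\text{d}^i\kp}{\text{d}t^i} - \varXi_h \tfrac{\text{d}^i\kp}{\text{d}t^i}}_{\mathfrak{S}} \lesssim h^k.
\]
The C\'ea-type estimate \eqref{XihStab} bounds this by the best-approximation error in $\mathfrak{S}_h$, and since $\boldsymbol{\Pi}_h$ maps into $\mathfrak{S}_h$ we may insert the BDM interpolant as competitor. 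Applying \eqref{interpEs} with $m=k$ then delivers an $h^k$ bound, uniformly in time by continuity of the derivatives and compactness of $[0,T]$.

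Next, I would estimate $\br - Q_h\br$ where $Q_h$ denotes the $\mathbb{L}^2(\Omega)$-orthogonal projection onto $\mathbb{Q}_h\subset [\cP_{k-1}(\cT_h)]^{d\times d}$. Since $\br(t)\in \mathbb{H}^k(\Omega)$ for each $t$, standard polynomial approximation theory on shape-regular meshes yields $\norm{\br(t) - Q_h\br(t)}_{0,\Omega}\lesssim h^k \norm{\br(t)}_{k,\Omega}$, and exchanging $\br$ with $\dot{\br}$ (legitimate since $Q_h$ is independent of $t$, so $\frac{\text{d}}{\text{d}t}Q_h\br = Q_h\dot{\br}$) provides the analogous bound for the time derivative. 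The continuity assumptions $\br\in\cC^1(\mathbb{H}^k(\Omega))$ give uniform control over $[0,T]$, producing $\norm{\br - Q_h\br}_{\W^{1,\infty}(\mathbb{L}^2(\Omega))}\lesssim h^k$.

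No step is really an obstacle here; the whole computation is a bookkeeping exercise combining Theorem~\ref{errorEc}, the quasi-optimality \eqref{XihStab} of $\varXi_h$, the interpolation estimate \eqref{interpEs}, and the elementary approximation property of the $L^2$ projection $Q_h$. The mildest delicacy is checking that the commutation of $\varXi_h$ and $Q_h$ with time differentiation is valid, but this is exactly the content of \eqref{idempoic} (and its direct analogue for $Q_h$). Substituting these two $h^k$ bounds into the right-hand side of the estimate provided by Theorem~\ref{errorEc} gives \eqref{asympSDc}.
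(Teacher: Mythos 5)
Your proof is correct and follows essentially the same route as the paper, whose entire proof is the single line that the corollary is a direct consequence of \eqref{asymp}, \eqref{asympDiv}, \eqref{XihStab} and Theorem~\ref{errorEc}; you merely spell out the intermediate bookkeeping (the commutation \eqref{idempoic}, inserting $\boldsymbol{\Pi}_h$ as the competitor in \eqref{XihStab} via \eqref{interpEs} with $m=k$, and the standard $\L^2$-projection bound for $Q_h$, which the paper records only later as \eqref{asymQ}). The one caveat, which your argument inherits verbatim from the paper, is that Theorem~\ref{errorEc} controls only $\max_{t\in[0,T]}\norm{(\br-\br_h)(t)}_{0,\Omega}$ and not the full $\W^{1,\infty}(\mathbb{L}^2(\Omega))$ norm written in \eqref{asympSDc}, so the final substitution strictly yields the $\L^\infty$-in-time bound for the rotation error --- an apparent typographical slip in the corollary's statement rather than a gap in your reasoning.
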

\begin{proof}
It is a direct consequence of \eqref{asymp}, \eqref{asympDiv}, \eqref{XihStab}, and Theorem \ref{errorEc}.
\end{proof}

We stress here that the quantity $\ddot{\bu}_h(t) := \tfrac{1}{\rho}\big(\bdiv \kp_h^+(t) + U_h \bF(t)\big)$  provides a direct and accurate approximation of the acceleration field $\ddot \bu$. Indeed, under the assumptions of Corollary~\ref{coro1c},  the triangle inequality yields
\[
\max_{t\in [0, T]}\| (\ddot\bu - \ddot \bu_h)(t)\|_{0,\Omega}\lesssim \max_{t\in [0, T]}\|\bdiv \jmath_\omega^+(\kp - \kp_h) \|_{0,\Omega} + \max_{t\in [0, T]}\|\bF - U_h \bF\|_{0,\Omega} \lesssim h^k.
\]

\section{The semi-discrete DG problem and its convergence analysis}\label{section5}

For any $k\geq 1$, we recall that $\mathfrak S^{DG}_h = [\cP_k(\cT_h)]^{d\times d}\times [\cP_k(\cT_h^V)]^{ d\times d}$ and let $\mathfrak S(h) :=\mathfrak S + \mathfrak S^{DG}_h$.  Given $\kq= (\bet, \btau) \in \mathfrak S^{DG}_h$, we define $\bdiv_h \jmath_\omega^+\kq \in \mathbf L^2(\Omega)$ by $(\bdiv_h \jmath_\omega^+\kq)|_{K} := \bdiv (\bet_K + \omega\btau_K)$ for all $K\in \cT_h$ and endow $\mathfrak S(h)$ with the norm
\[
 \norm{\kq}^2_{\mathfrak S(h)} := \norm{\jmath_\omega\kq}^2_{\mathfrak{L}_V^2(\Omega)} + \norm{\bdiv_h \jmath_\omega^+ \kq}^2_{0,\Omega} + \norm{h_{\cF}^{-1/2} \jump{\kq}}^2_{0,\cF^0_h}.
\]

From now on we assume that there exists $s > 1/2$ such that $\bF|_{\Omega_j}\in \bH^s(\Omega_j)$,  for $j = 1,\ldots ,J$. We consider the following semi-discrete counterpart of  \eqref{varFormR1Extended}: Find $\kp_h\in \cC^1(\mathfrak S_h^{DG})$ and $\br_h\in \cC^0(\mathbb Q_h)$ solving 
\begin{align}\label{varFormR1-varFormR2-h}
\begin{split}
 A\big( \jmath_{\omega}\ddot\kp_h &+ \pi_2 \dot\kp_h , \jmath_{\omega}\kq\big)  + (\dot{\br}_h,\jmath_{\omega}^+\kq)  + 
\big( \bdiv_h\jmath_{\omega}^+\kp_h,  \bdiv_h \jmath_{\omega}^+\kq \big)_\rho
\\[1ex]
&- \big(\mean{\tfrac{1}{\rho} \bdiv_h \jmath_\omega^+ \kp_h}, \jump{\jmath_\omega^+ \kq}\big)_{\cF^0_h}
- \big(\mean{\tfrac{1}{\rho} \bdiv_h \jmath_\omega^+ \kq}, \jump{\jmath_\omega^+ \kp_h}\big)_{\cF^0_h}
\\[1ex]
&\qquad + \big(\texttt{a} h_\cF^{-1}\jump{\jmath_\omega^+ \kp_h}, \jump{\jmath_\omega^+ \kq} \big)_{\cF^0_h}
 = -\big(\bF,  \bdiv_h \jmath_{\omega}^+\kq \big)_\rho + \big(\mean{\tfrac{1}{\rho} \bF}, \jump{\jmath_\omega^+ \kq}\big)_{\cF^0_h}+ (\ddot{\boldsymbol{g}}, \jmath_\omega^+\kq\bn)_{\cF_h^\partial}, 
 \\[1ex]
 &\qquad \qquad \qquad \qquad \quad (\bs, \jmath^+_{\omega}\kp_h) = 0, 
 \end{split}
 \end{align}
 for all $\kq\in \mathfrak{S}_h$ and $\bs\in \mathbb{Q}_h$, and subject to the initial conditions 
 \eqref{initial-R1-R2-h*c}. Here, we are using the notation
 \[
 (\ddot{\boldsymbol{g}}, \jmath_\omega^+\kq\bn)_{\cF_h^\partial} := \sum_{F\in \cF_h^\partial} \int_F \ddot{\boldsymbol{g}} \cdot (\jmath_\omega^+\kq)|_{F} \bn.
 \]

We now need to verify that, under suitable regularity assumptions, the DG scheme \eqref{varFormR1-varFormR2-h} is consistent with problem \eqref{varFormR1-varFormR2}. 

\begin{prop}\label{consistency}
Let  $(\kp, \br)$ be  the solution of \eqref{varFormR1Extended}. We assume that  $\kp\in \cC^2(\mathfrak L^2(\Omega))$, $\br\in \cC^1(\mathbb Q)$, and $\jmath_\omega^+\kp\in \mathbb{H}^s(\cT_h)$, with $s>1/2$. Then,  it holds 
 \begin{align}\label{consistent}
\begin{split}
 A\big( \jmath_{\omega}\ddot\kp + \pi_2 \dot\kp , \imath_{\omega}\kq\big)  &+ (\dot{\br},\jmath_{\omega}^+\kq)  
+ \big( \bdiv_h\jmath_{\omega}^+\kp,  \bdiv_h \jmath_{\omega}^+\kq \big)_\rho + \big(\texttt{a} h_\cF^{-1}\jump{\jmath_\omega^+ \kp}, \jump{\jmath_\omega^+ \kq} \big)_{\cF^0_h}
\\[1ex]
&- \big(\mean{\tfrac{1}{\rho} \bdiv_h \jmath_\omega^+ \kp}, \jump{\jmath_\omega^+ \kq}\big)_{\cF^0_h}
- \big(\mean{\tfrac{1}{\rho} \bdiv_h \jmath_\omega^+ \kq}, \jump{\jmath_\omega^+ \kp}\big)_{\cF^0_h}
\\[1ex]
 &\qquad\quad \quad  =   
-\big(  \bF,  \bdiv_h \jmath_{\omega}^+\kq \big)_\rho + \big(\mean{ \tfrac{1}{\rho} \bF}, \jump{\jmath_\omega^+ \kq}\big)_{\cF^0_h}+ (\ddot{\boldsymbol{g}}, \jmath_\omega^+\kq\bn)_{\cF_h^\partial},\quad \forall \kq\in \mathfrak S^{DG}_h,
\\[1ex]
&\quad  (\bs, \jmath^+_{\omega}\kp)  = 0, \quad  \forall \bs\in \mathbb{Q}_h,
 \end{split}
 \end{align}
\end{prop}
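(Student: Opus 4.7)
The plan is to derive \eqref{consistent} from the strong form of the original problem by elementwise integration by parts, and to use the relation between the rotation $\br$ and the antisymmetric part of $\nabla\dot\bu$ to handle the fact that $\jmath_\omega^+\kq$ need not be symmetric for $\kq \in \mathfrak S^{DG}_h$. The second equation is immediate: since $\mathbb Q_h\subset\mathbb Q$, the weak symmetry of $\kp$ in the second row of \eqref{varFormR1Extended} yields $(\bs,\jmath_\omega^+\kp)=0$ for all $\bs\in\mathbb Q_h$.

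To obtain the first equation, I test the pointwise constitutive laws (third and fourth rows of \eqref{split123}) against $\bet$ and $\omega\btau$ respectively, for an arbitrary $\kq=(\bet,\btau)\in\mathfrak S^{DG}_h$, and add the resulting identities, just as in the derivation of \eqref{const+}. This gives
\[
A\big(\jmath_\omega\ddot\kp + \pi_2\dot\kp,\jmath_\omega\kq\big) \;=\; \big(\beps(\ddot\bu),\,\jmath_\omega^+\kq\big).
\]
Next, since the hypotheses $\ddot\kp\in \L^2(\mathfrak L^2(\Omega))$ and $\dot\br\in\L^2(\mathbb Q)$ are granted by the assumed regularity, the proposition relating $\br$ to the rotation field applies: differentiating \eqref{rmeans} in time yields $\dot\br=\tfrac12(\nabla\ddot\bu-(\nabla\ddot\bu)^{\t})$, so adding $(\dot\br,\jmath_\omega^+\kq)$ on both sides gives
\[
A\big(\jmath_\omega\ddot\kp + \pi_2\dot\kp,\jmath_\omega\kq\big) + (\dot\br,\jmath_\omega^+\kq) \;=\; \big(\nabla\ddot\bu,\,\jmath_\omega^+\kq\big).
\]

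Now I integrate by parts on each $K\in\cT_h$ (this is where the regularity $\jmath_\omega^+\kp\in\mathbb H^s(\cT_h)$ with $s>1/2$ is essential to ensure well-defined face traces), obtaining
\[
\big(\nabla\ddot\bu,\jmath_\omega^+\kq\big) = -\big(\ddot\bu,\bdiv_h\jmath_\omega^+\kq\big) + \sum_{K\in\cT_h}\int_{\partial K}\ddot\bu\cdot(\jmath_\omega^+\kq)\bn_K.
\]
Regrouping face contributions and using that $\ddot\bu$ is single-valued on $\cF_h^0$ and equals $\ddot{\boldsymbol g}$ on $\Gamma$, the boundary term becomes $(\ddot\bu,\jump{\jmath_\omega^+\kq})_{\cF_h^0}+(\ddot{\boldsymbol g},\jmath_\omega^+\kq\bn)_{\cF_h^\partial}$. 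Next I use the equation of motion $\rho\ddot\bu=\bdiv_h\jmath_\omega^+\kp+\bF$ to substitute $\ddot\bu$ in the two remaining $\ddot\bu$-terms, noting that $\ddot\bu$ single-valued on faces allows me to replace $\ddot\bu|_F$ by $\mean{\tfrac1\rho(\bdiv_h\jmath_\omega^+\kp+\bF)}$ on each $F\in\cF_h^0$. Finally, since $\jmath_\omega^+\kp\in\mathbb H(\bdiv,\Omega)$ implies $\jump{\jmath_\omega^+\kp}=\mathbf 0$ on every interior face, I may freely add the consistent ``zero'' terms $-(\mean{\tfrac1\rho\bdiv_h\jmath_\omega^+\kq},\jump{\jmath_\omega^+\kp})_{\cF_h^0}$ and $(\texttt{a}h_\cF^{-1}\jump{\jmath_\omega^+\kp},\jump{\jmath_\omega^+\kq})_{\cF_h^0}$, which recovers \eqref{consistent} exactly.

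The main obstacle I anticipate is bookkeeping rather than analytic: making sure every algebraic rearrangement is justified under the stated regularity (in particular, that $\ddot\bu$ really is a trace-admissible, single-valued function on $\cF_h^0$, which follows from $\rho\ddot\bu=\bdiv_h\jmath_\omega^+\kp+\bF\in\bL^2(\Omega)$ combined with the hypothesis $\jmath_\omega^+\kp\in\mathbb H^s(\cT_h)$ and $\bF|_{\Omega_j}\in\bH^s(\Omega_j)$), and that the ``averages'' and ``jumps'' on interior faces introduced to mimic the DG form are genuine identities thanks to continuity of $\ddot\bu$ and vanishing jumps of $\jmath_\omega^+\kp$.
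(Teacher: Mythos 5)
Your proposal is correct and takes essentially the same approach as the paper: the paper's proof uses exactly the same ingredients --- $\mathbb Q_h\subset\mathbb Q$ for the second equation, the identity $A\big(\jmath_\omega\ddot\kp+\pi_2\dot\kp,\jmath_\omega\kq\big)=(\beps(\ddot\bu),\jmath_\omega^+\kq)$ from \eqref{const+}, the relation \eqref{rmeans} to absorb the antisymmetric part of $\nabla\ddot\bu$ into $\dot\br$, elementwise integration by parts, the substitution $\rho\ddot\bu=\bdiv\jmath_\omega^+\kp+\bF$ on elements and faces, and $\jump{\jmath_\omega^+\kp}=\mathbf 0$ --- only written in the reverse direction (rewriting the DG bilinear form back toward the strong form rather than deriving the identity forward from it). Your attention to the trace-admissibility of $\ddot\bu$ and of $\bdiv_h\jmath_\omega^+\kp$ on faces matches the role of the hypotheses $\jmath_\omega^+\kp\in\mathbb H^s(\cT_h)$ and $\bF|_{\Omega_j}\in\bH^s(\Omega_j)$, $s>1/2$, in the paper.
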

\begin{proof}
The second equation of \eqref{consistent} is satisfied because $\mathbb Q_h \subset \mathbb Q$. On the other hand, using that $\jump{\jmath_\omega^+ \kp} = \mathbf 0$ and $ \rho^{-1} \big( \bdiv_h \jmath_\omega^+ \kp + \bF \big) = \rho^{-1} \big(  \bdiv \jmath_\omega^+ \kp + \bF \big) = \ddot\bu$ yield 
\begin{align}\label{consistent0}
\begin{split}
 & A\big( \jmath_{\omega}\ddot\kp + \pi_2 \dot\kp , \jmath_{\omega}\kq\big)  + (\dot{\br},\jmath_{\omega}^+\kq)  
+ \big( \bdiv_h\jmath_{\omega}^+\kp,  \bdiv_h \jmath_{\omega}^+\kq \big)_\rho
\\[1ex]
&\quad - \big(\mean{\tfrac{1}{\rho} \bdiv_h \jmath_\omega^+ \kp}, \jump{\jmath_\omega^+ \kq}\big)_{\cF^0_h}
- \big(\mean{\tfrac{1}{\rho} \bdiv_h \jmath_\omega^+ \kq}, \jump{\jmath_\omega^+ \kp}\big)_{\cF^0_h}
+ \big(\texttt{a} h_\cF^{-1}\jump{\jmath_\omega^+ \kp}, \jump{\jmath_\omega^+ \kq} \big)_{\cF^0_h}
\\[1ex]
 &\qquad \quad =   A\big( \jmath_{\omega}\ddot\kp + \pi_2 \dot\kp , \imath_{\omega}\kq\big)  + (\dot{\br},\jmath_{\omega}^+\kq)  
+ \big( \ddot\bu,  \bdiv_h \jmath_{\omega}^+\kq \big) - \big(\mean{\ddot\bu} , \jump{\jmath_\omega^+ \kq}\big)_{\cF^0_h}
\\[1ex]
&
\qquad \qquad \qquad \qquad \qquad \qquad \quad -\big(  \bF,  \bdiv_h \jmath_{\omega}^+\kq \big)_\rho + \big(\mean{ \tfrac{1}{\rho} \bF}, \jump{\jmath_\omega^+ \kq}\big)_{\cF^0_h},\quad \forall \kq\in \mathfrak S^{DG}_h
 \end{split}
 \end{align}
 Now, taking into account that ($\bu|_\Gamma = \boldsymbol{g}$)
 \begin{align*}
 	\big(\mean{\ddot\bu} , \jump{\jmath_\omega^+ \kq}\big)_{\cF^0_h} &= \big(\mean{\ddot\bu} , \jump{\jmath_\omega^+ \kq}\big)_{\cF_h} - (\ddot{\boldsymbol{g}}, \jmath_\omega^+\kq\bn)_{\cF_h^\partial} = \sum_{K\in \cT_h}\int_{\partial K} \ddot{\bu}\cdot (\jmath_\omega^+ \kq)\bn_K - (\ddot{\boldsymbol{g}}, \jmath_\omega^+\kq\bn)_{\cF_h^\partial}
 	\\[1ex]
 	&= \sum_{K\in \cT_h} \int_K \big( \nabla \ddot{\bu}: \jmath_\omega^+\kq + \ddot{\bu}\cdot \bdiv \jmath_\omega^+\kq\big)- (\ddot{\boldsymbol{g}}, \jmath_\omega^+\kq\bn)_{\cF_h^\partial},
 \end{align*}
 and keeping in mind \eqref{rmeans} and \eqref{const+}, we deduce that  
 \begin{align*}\label{plug}
 \begin{split}
 \big( \ddot\bu,  \bdiv_h \jmath_{\omega}^+\kq \big) - \big(\mean{\ddot\bu} , \jump{\jmath_\omega^+ \kq}\big)_{\cF^0_h} &= - (\beps(\ddot \bu), \jmath_{\omega}^+\kq) - (\dot\br, \jmath_{\omega}^+\kq) + \dual{\ddot{\boldsymbol{g}}, \jmath_\omega^+ \kq}_\Gamma
 \\
 & = -A\big( \jmath_{\omega}\ddot\kp + \pi_2 \dot\kp , \jmath_{\omega}\kq\big)  -(\dot{\br},\jmath_{\omega}^+\kq) + (\ddot{\boldsymbol{g}}, \jmath_\omega^+\kq\bn)_{\cF_h^\partial}.
 \end{split}
 \end{align*}
 Plugging  the last identity in \eqref{consistent0} gives the result. 
\end{proof}


Here again, we decompose the errors $\kp - \kp_h = \mathcal I_{\kp}^h + \be_{\kp}^h$ and $\br - \br_h = \mathcal I_{\br}^h + \be_{\br}^h$,  with $\be_{\kp}^h(t) :=  \varXi_h\kp(t) - \kp_h(t)$ and $\be_{\br}^h(t) :=  Q_h\br(t) - \br_h(t)$, so that \eqref{inti0c} and \eqref{skew0c} still hold true.   
We recall that $\mathcal I_{\kp}^h := \kp - \varXi_h \kp \in \mathfrak S$ by definition of $\varXi_h$.  

\begin{theorem} \label{convEstim}
Assume that the hypotheses of Proposition~\ref{consistency} are satisfied. There exists a constant $\emph{\texttt{a}}_0>0$, independent of $h$, such that  the  error estimate 
\begin{equation}\label{errorE}
\begin{array}{rc}
\max_{t\in [0, T]}\norm{(\kp-\kp_h)(t)}_{\mathfrak{S}(h)} + \max_{t\in [0, T]}\norm{\jmath_\omega (\dot\kp-\dot\kp_h)(t)}_{\mathfrak{L}_V^2(\Omega)} + \max_{t\in [0, T]}\norm{ (\br - \br_h)(t)}_{0,\Omega}
\\[1ex]
\lesssim \norm{\mathcal{I}_{\kp}^h }_{\W^{2,\infty}(\mathfrak{S})} + \norm{ h_\cF^{1/2} \mean{\tfrac{1}{\rho} \bdiv \jmath_\omega^+ {\mathcal I}_{\kp}^h}}_{W^{1,\infty}(\bL^2(\cF^0_h))}+ \norm{\mathcal{I}_{\br}^h}_{\W^{1,\infty}(\mathbb{L}^2(\Omega))},
\end{array}
\end{equation}
holds true for all $\emph{\texttt{a}}\geq \emph{\texttt{a}}_0$.
\end{theorem}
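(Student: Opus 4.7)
The plan is to adapt the CG energy argument of Theorem~\ref{errorEc} to the present DG setting. I would work with the DG energy functional
\[
\mathcal{E}^h(\kq)(t) := \tfrac{1}{2} A\bigl(\jmath_\omega\dot\kq(t), \jmath_\omega\dot\kq(t)\bigr) + \tfrac{1}{2} B_h\bigl(\kq(t), \kq(t)\bigr),
\]
where $B_h$ denotes the symmetric interior penalty bilinear form encoded in the left-hand side of \eqref{varFormR1-varFormR2-h}. The first step is to subtract \eqref{varFormR1-varFormR2-h} from the consistency identity of Proposition~\ref{consistency}, use the decomposition $\kp-\kp_h = \mathcal{I}_\kp^h + \be_\kp^h$ and $\br-\br_h = \mathcal{I}_\br^h + \be_\br^h$, and exploit two structural properties of $\mathcal{I}_\kp^h = \kp - \varXi_h\kp$: the commuting diagram \eqref{div0} combined with $\bdiv_h \jmath_\omega^+ \mathfrak{S}_h^{DG}\subset \mathbf{U}_h$ annihilates the volume contribution $(\bdiv_h\jmath_\omega^+\mathcal{I}_\kp^h, \bdiv_h\jmath_\omega^+\kq)_\rho$, while the $\mathbb{H}(\bdiv,\Omega)$-conformity of $\jmath_\omega^+\mathcal{I}_\kp^h$ cancels its jumps and collapses both face consistency contributions to the single term $\bigl(\mean{\tfrac{1}{\rho}\bdiv\jmath_\omega^+\mathcal{I}_\kp^h}, \jump{\jmath_\omega^+\kq}\bigr)_{\cF^0_h}$. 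The outcome is an error equation
\[
A\bigl( \jmath_\omega\ddot{\be}_\kp^h + \pi_2 \dot{\be}_\kp^h, \jmath_\omega\kq\bigr) + (\dot{\be}_\br^h, \jmath_\omega^+\kq) + B_h(\be_\kp^h, \kq) = F(\kq), \qquad \forall \kq \in \mathfrak{S}_h^{DG},
\]
whose right-hand side $F(\kq)$ only features the three projection terms above.

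Next I would establish coercivity of $B_h$ on $\mathfrak{S}_h^{DG}$ for $\texttt{a}\geq \texttt{a}_0$ sufficiently large. Applying the discrete trace inequality of Proposition~\ref{card} to the piecewise polynomial $\tfrac{1}{\rho}\bdiv_h\jmath_\omega^+\kq$ and Young's inequality with a small weight, the cross term $\bigl(\mean{\tfrac{1}{\rho}\bdiv_h\jmath_\omega^+\kq},\jump{\jmath_\omega^+\kq}\bigr)_{\cF^0_h}$ is absorbed into the volume and penalty parts to yield $B_h(\kq,\kq)\gtrsim \norm{\bdiv_h \jmath_\omega^+\kq}^2_{0,\Omega} + \norm{h_\cF^{-1/2}\jump{\jmath_\omega^+\kq}}^2_{0,\cF^0_h}$ for all $\kq\in \mathfrak{S}_h^{DG}$. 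Testing the error equation with $\kq = \dot{\be}_\kp^h$, the rotation contribution $(\dot{\be}_\br^h,\jmath_\omega^+\dot{\be}_\kp^h)$ vanishes: differentiating the second relation of \eqref{varFormR1-varFormR2-h} together with the analogue satisfied by $\varXi_h\kp$ through \eqref{Xih} shows that $(\bs, \jmath_\omega^+\dot{\be}_\kp^h)=0$ for every $\bs\in \mathbb Q_h$, and $\dot{\be}_\br^h(t)\in \mathbb Q_h$. Symmetry of $A$ and $B_h$, together with the non-negativity of $(\omega\cV\pi_2\dot{\be}_\kp^h,\pi_2\dot{\be}_\kp^h)$, then yield $\dot{\mathcal{E}}^h(\be_\kp^h)(t)\leq F(\dot{\be}_\kp^h)(t)$.

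The hard part is bounding the face contribution of $F$ after integration in time, because $\jump{\jmath_\omega^+\dot{\be}_\kp^h}$ is \emph{not} controlled by $\mathcal{E}^h(\be_\kp^h)^{1/2}$. The fix is integration by parts in time,
\[
\int_0^t \bigl(\mean{\tfrac{1}{\rho}\bdiv\jmath_\omega^+\mathcal{I}_\kp^h}, \jump{\jmath_\omega^+\dot{\be}_\kp^h}\bigr)_{\cF^0_h} \text{d}s = \bigl(\mean{\tfrac{1}{\rho}\bdiv\jmath_\omega^+\mathcal{I}_\kp^h}(t), \jump{\jmath_\omega^+\be_\kp^h}(t)\bigr)_{\cF^0_h} - \int_0^t \bigl(\mean{\tfrac{1}{\rho}\bdiv\jmath_\omega^+\dot{\mathcal{I}}_\kp^h}, \jump{\jmath_\omega^+\be_\kp^h}\bigr)_{\cF^0_h} \text{d}s,
\]
where the boundary contribution at $s=0$ vanishes thanks to $\be_\kp^h(0)=\mathbf 0$. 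Cauchy--Schwarz combined with the $\mathcal{E}^h$-control of $\norm{h_\cF^{-1/2}\jump{\jmath_\omega^+\be_\kp^h}}_{0,\cF^0_h}$ then bounds this quantity by $\norm{h_\cF^{1/2}\mean{\tfrac{1}{\rho}\bdiv\jmath_\omega^+\mathcal{I}_\kp^h}}_{\W^{1,\infty}(\bL^2(\cF^0_h))}$ times $\max_{[0,T]}\mathcal{E}^h(\be_\kp^h)^{1/2}$, which is absorbed into the left-hand side through Young. The first two contributions of $F$ are handled by a direct Cauchy--Schwarz plus the same division-by-$\mathcal{E}^{h\,1/2}$ manipulation used in the proof of Theorem~\ref{errorEc}. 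This delivers the bound on $\max_{[0,T]}\mathcal{E}^h(\be_\kp^h)^{1/2}$, which in turn controls $\norm{\be_\kp^h}_{\mathfrak{S}(h)}$ and $\norm{\jmath_\omega \dot{\be}_\kp^h}_{\mathfrak{L}^2_V(\Omega)}$ via coercivity of $B_h$.

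Finally, to bound $\be_\br^h$, I would integrate the error equation once in time, producing an identity $(\be_\br^h(t), \jmath_\omega^+ \kq) = \mathcal{G}_h(t)(\kq)$ in which $\mathcal{G}_h$ collects only quantities already estimated. Applying the discrete inf-sup condition \eqref{discInfSup} (with $\bv = \mathbf 0$) supplies the bound for $\max_{[0,T]}\norm{\be_\br^h(t)}_{0,\Omega}$, and a final triangle inequality against $\mathcal{I}_\kp^h$ and $\mathcal{I}_\br^h$ closes the argument.
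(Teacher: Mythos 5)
Your proposal is correct and follows essentially the same path as the paper: the same error equation obtained from consistency plus the two structural properties of $\varXi_h$ (the commuting diagram \eqref{div0} and the $\mathbb{H}(\bdiv,\Omega)$-conformity of $\jmath_\omega^+\mathcal{I}_\kp^h$), the same test function $\dot{\be}_\kp^h$ with the rotation term killed by the differentiated orthogonality \eqref{skew0c}, the same integration by parts in time for the face consistency term, and the same inf-sup argument restricted to $\mathfrak{S}_h\subset\mathfrak{S}_h^{DG}$ for $\be_\br^h$. The only difference is organizational: you package the jump and cross face terms into a DG energy via coercivity of $B_h$ (using Proposition~\ref{card} and Young to fix $\texttt{a}_0$), whereas the paper keeps the standard energy $\mathcal{E}$, writes the cross term as an exact time derivative, and absorbs it at final time with the same trace inequality and Young argument --- the two bookkeepings are equivalent.
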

\begin{proof}
Using \eqref{consistent}, \eqref{div0} combined with the fact that $\bdiv_h \big(\jmath_\omega^+ \mathfrak S_h^{DG}\big) \subset \mathbf U_h$, we obtain the identity  
\begin{align}\label{errorIdentity}
\begin{split}
A\big( \jmath_\omega\ddot{\be}_{\kp}^h &+  \pi_2\dot{\be}_{\kp}^h, \jmath_\omega\kq\big)  + (\dot{\be}_{\br}^h, \jmath_\omega^+\kq)    
\\[1ex]
&+ \big(\bdiv_h \jmath_\omega^+\be_{\kp}^h(t),  \bdiv_h\jmath_\omega^+\kq \big)_\rho + \big(\texttt{a} h_\cF^{-1}\jump{\jmath_\omega^+ \be_{\kp}^h}, \jump{\jmath_\omega^+ \kq} \big)_{\cF^0_h}
\\[1ex]
&- \big(\mean{\tfrac{1}{\rho} \bdiv_h \jmath_\omega^+ \be_{\kp}^h}, \jump{\jmath_\omega^+ \kq}\big)_{\cF^0_h} - \big(\mean{\tfrac{1}{\rho} \bdiv_h \jmath_\omega^+ \kq}, \jump{\jmath_\omega^+ \be_{\kp}^h}\big)_{\cF^0_h} = F(\kq), \quad \forall \kq \in \mathfrak S^{DG}_h,
\end{split}
\end{align}
where 
\[
F\big(\kq\big):=  - A\big( \jmath_\omega \ddot{\mathcal I}_{\kp}^h + \pi_2\dot{\mathcal I}_{\kp}^h, \jmath_\omega\kq\big)  - (\dot{\mathcal I}_{\br}^h , \jmath_\omega^+\kq) + (\mean{\tfrac{1}{\rho} \bdiv \jmath_\omega^+ {\mathcal I}_{\kp}^h}, \jump{\jmath_\omega^+\kq})_{\cF^0_h}.
\]
Thanks to \eqref{skew0c},   the choice $\kq = \dot{\be}_{\kp}^h(t)$ in  \eqref{errorIdentity} yields  
\begin{align*}
\begin{split}
\dot{\mathcal{E}}\big({\be}_{\kp, h}\big)  
+ \frac{1}{2}\dfrac{\text{d}}{\text{d}t}\big(\texttt{a} h_\cF^{-1}\jump{\jmath_\omega^+ \be_{\kp}^h}, \jump{\jmath_\omega^+ \be_{\kp}^h} \big)_{\cF^0_h}\leq  
 \dfrac{\text{d}}{\text{d}t} \big(\mean{\tfrac{1}{\rho} \bdiv_h \jmath_\omega^+ \be_{\kp}^h}, \jump{\jmath_\omega^+ \be_{\kp}^h}\big)_{\cF^0_h}  +  F(\dot{\be}_{\kp}^h),
\end{split}
\end{align*}
where we took into account that the term $A(\pi_2\dot{\be}_{\kp}^h, \jmath_\omega\dot{\be}_{\kp}^h) = (\omega \cV\pi_2\dot{\be}_{\kp, h},  \pi_2\dot{\be}_{\kp, h}  )$ is non-negative. Integrating the last estimate with respect to time we get   
\begin{align}\label{in1}
\begin{split}
\mathcal{E}\big(\be_{\kp, h}\big)  
+ \frac{\texttt{a}}{2} \norm{h_\cF^{-1/2}\jump{\jmath_\omega^+ \be_{\kp}^h} }_{0,\cF^0_h}^2 \leq 
  \big(\mean{\tfrac{1}{\rho} \bdiv_h \jmath_\omega^+ \be_{\kp}^h}, \jump{\jmath_\omega^+ \be_{\kp}^h}\big)_{\cF^0_h}  +  \int_0^t F(\dot{\be}_{\kp}^h)\, \text{d}s.
\end{split}
\end{align}
We will now estimate the different terms of the right-hand side of \eqref{in1} by using repeatedly  the Cauchy-Schwarz inequality, \eqref{contA}, and \eqref{ellipA}, followed by the  well known inequality $a b \leq \frac{a^2}{8} +  2 b^2$. Thanks to \eqref{discTrace},  the first term can be bounded as follows: 
\begin{align}\label{babor0}
\begin{split}
	\big(\mean{\tfrac{1}{\rho} \bdiv_h \jmath_\omega^+ \be_{\kp}^h}, \jump{\jmath_\omega^+ \be_{\kp}^h}\big)_{\cF^0_h} &\leq \norm{h_\cF^{1/2}\mean{\tfrac{1}{\rho} \bdiv_h \jmath_\omega^+ \be_{\kp}^h} }_{0,\cF^0_h} \norm{h_\cF^{-1/2}\jump{\jmath_\omega^+ \be_{\kp}^h} }_{0,\cF^0_h}
	\\[1ex]
	&\leq \frac{C_{\text{tr}}}{\norm{\rho^{-1}}_{L^{\infty}(\Omega)}} \norm{h_\cF^{-1/2}\jump{\jmath_\omega^+ \be_{\kp}^h} }_{0,\cF^0_h} \norm{ \bdiv_h \jmath_\omega^+ \be_{\kp}^h }_{0,\Omega}
	\\[1ex]
	&\leq \frac{1}{4}\max_{[0, T]}\mathcal{E}\big(\be_{\kp, h}\big) +  \frac{2C_{\text{tr}}^2}{\norm{\rho^{-1}}^2_{L^{\infty}(\Omega)}}  \norm{h_\cF^{-1/2}\jump{\jmath_\omega^+ \be_{\kp}^h} }_{0,\cF^0_h}^2.
	\end{split}
\end{align}
Next, we consider the splitting $\int_0^t F(\dot{\be}_{\kp}^h)\, \text{d}s = (1) + (2) + (3)$ and estimate each term individually as shown below. For the first term we have that  
\begin{align}\label{babor1}
\begin{split}
	(1) &:= -\int_0^t A\big( \jmath_\omega \ddot{\mathcal I}_{\kp}^h + \pi_2\dot{\mathcal I}_{\kp}^h, \jmath_\omega\dot{\be}_{\kp}^h\big)\, \text{d}s 
	\\[1ex]&
	\leq \sqrt{M} \max_{[0, T]} A\big( \jmath_\omega\dot{\be}_{\kp}^h, \jmath_\omega\dot{\be}_{\kp}^h\big)^{1/2}  \int_0^T \Big(\norm{\jmath_\omega\ddot{\mathcal I}_{\kp}^h}_{\mathfrak{L}_V^2(\Omega)} + \norm{\dot{\mathcal I}_{\kp}^h}_{\mathfrak{L}_V^2(\Omega)}\Big)\text{d}t
	\\[1ex]
	& \leq \frac{1}{4}\max_{[0, T]}\mathcal{E}\big(\be_{\kp, h}\big) + 2 M \Big( \int_0^T \norm{\jmath_\omega\ddot{\mathcal I}_{\kp}^h}_{\mathfrak{L}_V^2(\Omega)} + \norm{\dot{\mathcal I}_{\kp}^h}_{\mathfrak{L}_V^2(\Omega)}\text{d}t\Big)^2.
\end{split}	
\end{align}
For the second one it holds 
\begin{align}\label{babor2}
\begin{split}
	(2) &:= - \int_0^t (\dot{\mathcal I}_{\br}^h , \jmath_\omega^+\dot{\be}_{\kp}^h) \, \text{d}s \leq \max_{[0, T]} \norm{\jmath_\omega^+\dot{\be}_{\kp}^h}_{\mathfrak{L}_V^2(\Omega)} \int_0^T \norm{ \dot{\mathcal I}_{\br}^h }_{0,\Omega} \text{d}t 
	\\[1ex]&
	\leq \sqrt 2 \max_{[0,T]}\norm{\jmath_\omega \dot{\be}_{\kp}^h}_{\mathfrak{L}_V^2(\Omega)} \int_0^T \norm{ \dot{\mathcal I}_{\br}^h }_{0,\Omega} \text{d}t
	\leq \sqrt{\frac{2}{\alpha}}\max_{[0,T]} A\big(\jmath_\omega \dot{\be}_{\kp}^h, \jmath_\omega \dot{\be}_{\kp}^h  \big)^{1/2} \int_0^T \norm{ \dot{\mathcal I}_{\br}^h }_{0,\Omega} \text{d}t
	\\[1ex]
	&\leq    \frac{1}{4}\max_{[0, T]}\mathcal{E}\big(\be_{\kp, h}\big) + \frac{4}{\alpha} \Big( \int_0^T \norm{\dot{\mathcal I}_{\br}^h}_{0,\Omega} \Big)^2.
	\end{split}
\end{align}
Finally, using an integration by parts gives    
\begin{align}\label{babor3}
\begin{split}
	(3) &:= \int_0^t (\mean{\tfrac{1}{\rho} \bdiv \jmath_\omega^+ {\mathcal I}_{\kp}^h}, \jump{\jmath_\omega^+\dot{\be}_{\kp}^h})_{\cF^0_h} \,\text{d}s 
	\\[1ex]
	&= - \int_0^t (\mean{\tfrac{1}{\rho} \bdiv \jmath_\omega^+ \dot{{\mathcal I}}_{\kp}^h}, \jump{\jmath_\omega^+\be_{\kp}^h})_{\cF^0_h} \,\text{d}s 
	+ (\mean{\tfrac{1}{\rho} \bdiv \jmath_\omega^+ {\mathcal I}_{\kp}^h}, \jump{\jmath_\omega^+\be_{\kp}^h})_{\cF^0_h} \, \text{d}s 
	\\[1ex]
	&\leq   (1+T)\max_{[0, T]} \norm{h_\cF^{-1/2}\jump{\jmath_\omega^+ \be_{\kp}^h} }_{0,\cF^0_h} 
	\norm{ h_\cF^{1/2} \mean{\tfrac{1}{\rho} \bdiv \jmath_\omega^+ {\mathcal I}_{\kp}^h}}_{W^{1,\infty}(\bL^2(\cF^0_h))}
	\\[1ex]
	&\leq 2 \max_{[0, T]} \norm{h_\cF^{-1/2}\jump{\jmath_\omega^+ \be_{\kp}^h} }_{0,\cF^0_h}^2 + \frac{(T+1)^2}{8} \norm{ h_\cF^{1/2} \mean{\tfrac{1}{\rho} \bdiv \jmath_\omega^+ {\mathcal I}_{\kp}^h}}^2_{W^{1,\infty}(\bL^2(\cF^0_h))}.  
\end{split}
\end{align}

Plugging \eqref{babor0},  \eqref{babor1}, \eqref{babor2}, and \eqref{babor3}  in \eqref{in1} and rearranging terms we deduce that, if $\texttt{a}\geq \texttt{a}_0:= \frac{4C_{\text{tr}}^2}{\norm{\rho^{-1}}^2_{L^{\infty}(\Omega)}} + \frac{9}{4}$, then   
\begin{align*}
\frac{1}{4} \max_{[0, T]}\mathcal{E}\big({\be}_{\kp, h}\big)  
+  \frac{1}{4} \max_{[0, T]} \norm{h_\cF^{-1/2}\jump{\jmath_\omega^+ \be_{\kp}^h} }_{0,\cF^0_h}^2 \leq C \Big(  
\norm{{\mathcal I}_{\kp}^h}_{W^{2,\infty}(\mathfrak L_V^2(\Omega))}^2 + \norm{{\mathcal I}_{\br}^h}_{W^{1,\infty}(\mathbb L^2(\Omega))}^2 
\\[1ex]
+  \norm{ h_\cF^{1/2} \mean{\tfrac{1}{\rho} \bdiv \jmath_\omega^+ {\mathcal I}_{\kp}^h}}^2_{W^{1,\infty}(\bL^2(\cF^0_h))} \Big),
\end{align*}
with $C>0$ depending only on $T$, $\alpha$, $M$ and $\norm{\omega}_{L^{\infty}(\Omega)}$. Finally, using \eqref{eq-extra-1} and 
\[
\norm{\jmath_\omega{\be}_{\kp}^h(t)}_{\mathfrak{L}_V^2(\Omega)} = \norm{\int_0^t \jmath_\omega\dot{\be}_{\kp}^h(s)\, \text{d}s}_{\mathfrak{L}_V^2(\Omega)}\leq T \max_{[0,T]} \norm{\jmath_\omega \dot{\be}_{\kp}^h(t)}_{\mathfrak{L}_V^2(\Omega)}, 
\]
we conclude that 
\begin{align}\label{meal}
\begin{split}
\max_{[0, T]}\| \jmath_\omega\dot{\be}_{\kp}^h(t)\|_{\mathfrak{L}_V^2(\Omega)} + \max_{[0, T]}\|\be_{\kp}^h(t)\|_{\mathfrak S(h)} 
&\lesssim \norm{{\mathcal I}_{\kp}^h}_{W^{2,\infty}(\mathfrak L_V^2(\Omega))} + \norm{{\mathcal I}_{\br}^h}_{W^{1,\infty}(\mathbb L^2(\Omega))}
\\[1ex]
&\quad   +  \norm{ h_\cF^{1/2} \mean{\tfrac{1}{\rho} \bdiv \jmath_\omega^+ {\mathcal I}_{\kp}^h}}_{W^{1,\infty}(\bL^2(\cF^0_h))}. 
\end{split}
\end{align}

To estimate the error $\be^h_{\br}$, we first notice that integrating once with respect to time in  \eqref{errorIdentity} we obtain 
\begin{align}\label{errorIdentityr}
\begin{split}
(\be_{\br}^h, \jmath_\omega^+\kq) &= -A\big( \jmath_\omega\dot{\be}_{\kp}^h +  \pi_2{\be}_{\kp}^h, \jmath_\omega\kq\big)      
- \int_0^t \big(\bdiv_h \jmath_\omega^+\be_{\kp}^h(t),  \bdiv_h\jmath_\omega^+\kq \big)_\rho 
- \int_0^t \big(\texttt{a} h_\cF^{-1}\jump{\jmath_\omega^+ \be_{\kp}^h}, \jump{\jmath_\omega^+ \kq} \big)_{\cF^0_h}
\\[1ex]
&\quad + \int_0^t\big(\mean{\tfrac{1}{\rho} \bdiv_h \jmath_\omega^+ \be_{\kp}^h}, \jump{\jmath_\omega^+ \kq}\big)_{\cF^0_h} + \big(\mean{\tfrac{1}{\rho} \bdiv_h \jmath_\omega^+ \kq}, \jump{\jmath_\omega^+ \be_{\kp}^h}\big)_{\cF^0_h} + \int_0^t F(\kq),
\end{split}
\end{align}
for all $\kq \in \mathfrak S^{DG}_h$. Therefore, as  $\mathfrak S_h \subset \mathfrak S^{DG}_h$ and  $\norm{\kq}_{\mathfrak S(h)} = \norm{\kq}_{\mathfrak S}$ for all $\kq \in \mathfrak S_h$,    we deduce from the inf-sup condition  \eqref{discInfSup} that 
\begin{align*}
	\sup_{\kq\in \mathfrak S^{DG}_h}
\dfrac{(\be_{\br}^h, \jmath_\omega^+\kq )}{\norm{\kq}_{\mathfrak S(h)}} \geq \sup_{\kq\in \mathfrak S_h}
\dfrac{(\be_{\br}^h, \jmath_\omega^+\kq )}{\norm{\kq}_{\mathfrak S(h)}} = \sup_{\kq\in \mathfrak S_h}
\dfrac{(\be_{\br}^h, \jmath_\omega^+\kq )}{\norm{\kq}_{\mathfrak S}} \geq \beta^*\norm{\be_{\br}^h }_{0,\Omega}.
\end{align*}
Substituting identity \eqref{errorIdentityr} in the foregoing estimate and employing the Cauchy-Schwarz inequality, \eqref{discTrace}, and \eqref{meal} we get  
\begin{align*}
\norm{\be_{\br}^h }_{0,\Omega} 
\lesssim \norm{{\mathcal I}_{\kp}^h}_{W^{2,\infty}(\mathfrak L_V^2(\Omega))}
 + \norm{{\mathcal I}_{\br}^h}_{W^{1,\infty}(\mathbb L^2(\Omega))} +  \norm{ h_\cF^{1/2} \mean{\tfrac{1}{\rho} \bdiv \jmath_\omega^+ {\mathcal I}_{\kp}^h}}_{W^{1,\infty}(\bL^2(\cF^0_h))}.
\end{align*}
Finally, combining the last estimate with \eqref{meal} and using the triangle inequality  we conclude that 
\begin{align}\label{fint}
\begin{split}
	\max_{[0, T]}\norm{\jmath_\omega(\dot\kp-\dot\kp_h)}_{\mathfrak{L}_V^2(\Omega)} &+ \max_{[0, T]}\norm{\kp-\kp_h}_{\mathfrak S(h)} + \max_{[0,T]}
	\norm{\br - \br_h}_{0,\Omega}\lesssim \norm{{\mathcal I}_{\kp}^h}_{W^{2,\infty}(\mathfrak{S})} 
\\[1ex] 
&+  \norm{ h_\cF^{1/2} \mean{\tfrac{1}{\rho} \bdiv \jmath_\omega^+ {\mathfrak I}_{\kp}^h}}_{W^{1,\infty}(\bL^2(\cF^0_h))} + \norm{{\mathcal I}_{\br}^h}_{W^{1,\infty}(\mathbb L^2(\Omega))},
\end{split}
\end{align}
and the result follows.
\end{proof}

\begin{corollary}
If, besides the hypotheses of Theorem~\ref{convEstim}, we assume that $\kp \in \cC^2(\prod _{j=1}^J[\mathbb{H}^{k}(\Omega_j)]^{2})$, $\bdiv \jmath_\omega^+\kp \in \cC^2(\prod _{j=1}^J\bH^{k}(\Omega_j))$ and $\br\in \cC^1(\prod _{j=1}^J\mathbb{H}^{k}(\Omega_j))$, then 
\begin{equation}\label{asympSD}
\max_{[0, T]}\norm{\jmath_\omega(\dot\kp-\dot\kp_h)}_{\mathfrak{L}_V^2(\Omega)} + \max_{[0, T]}\norm{\kp-\kp_h}_{\mathfrak S(h)} + \max_{[0,T]} \norm{\br - \br_h }_{0,\Omega}  \lesssim  h^k.
\end{equation}
\end{corollary}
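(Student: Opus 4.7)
The plan is to invoke Theorem~\ref{convEstim} and bound each of the three terms appearing on the right-hand side of \eqref{errorE} by $h^k$ under the stated piecewise regularity. Throughout, I would use the compatibility of $\cT_h$ with the partition $\bar\Omega = \cup_{j=1}^J \bar\Omega_j$, which guarantees that every $K\in\cT_h$ lies entirely inside some $\Omega_j$, so the piecewise regularity hypotheses translate into elementwise $\mathbb H^k(K)$/$\bH^k(K)$ bounds.

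For the first term $\norm{\mathcal{I}_{\kp}^h}_{\W^{2,\infty}(\mathfrak{S})}$, the quasi-optimality \eqref{XihStab} of $\varXi_h$ allows me to replace $\varXi_h\kp$ by the BDM interpolant $\boldsymbol{\Pi}_h\kp$ and then apply \eqref{interpEs}. Since $\varXi_h$ (and $\boldsymbol{\Pi}_h$) is linear and commutes with $\tfrac{\text{d}}{\text{d}t}$ as in \eqref{idempoic}, the $\W^{2,\infty}$ norm of $\mathcal{I}_{\kp}^h$ reduces, at each fixed time, to spatial interpolation errors of $\kp$, $\dot\kp$, $\ddot\kp$ and $\bdiv\jmath_\omega^+\kp$, $\bdiv\jmath_\omega^+\dot\kp$, $\bdiv\jmath_\omega^+\ddot\kp$, all of which are $O(h^k)$ by the assumed regularity.

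Next, for $\norm{\mathcal{I}_{\br}^h}_{\W^{1,\infty}(\mathbb{L}^2(\Omega))}$, the operator $Q_h$ is the $\mathbb L^2(\Omega)$-orthogonal projection onto $\mathbb Q_h = [\cP_{k-1}(\cT_h)]^{d\times d}\cap \mathbb Q$ and commutes with time differentiation. Standard elementwise $\L^2$-approximation by polynomials of degree $k-1$ applied to $\br,\dot\br\in \cC^0(\prod_j \mathbb H^k(\Omega_j))$ delivers the desired $O(h^k)$ bound.

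The most delicate contribution is the face term $\norm{h_\cF^{1/2}\mean{\tfrac{1}{\rho}\bdiv \jmath_\omega^+ \mathcal{I}_{\kp}^h}}_{\W^{1,\infty}(\bL^2(\cF^0_h))}$. Here I would use the commuting diagram \eqref{div0}, which rewrites $\tfrac{1}{\rho}\bdiv\jmath_\omega^+ \mathcal{I}_{\kp}^h = (I - U_h)(\tfrac{1}{\rho}\bdiv\jmath_\omega^+\kp)$; since $\rho$ is piecewise constant, this quantity inherits the piecewise $\bH^k$ regularity of $\bdiv\jmath_\omega^+\kp$. For each face $F\subset\partial K$ and each $v\in\bH^1(K)$, the scaled continuous trace inequality $h_F^{1/2}\norm{v}_{0,F}\lesssim \norm{v}_{0,K} + h_K\,|v|_{1,K}$ combined with the standard $\L^2$-projection estimate $\norm{(I-U_K)w}_{0,K}+h_K|(I-U_K)w|_{1,K}\lesssim h_K^k \norm{w}_{k,K}$ provides an $O(h^k)$ bound on each face. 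Summing over $\cF^0_h$ (each face touching at most two elements) and commuting $U_h$ with $\partial_t$ for the $\W^{1,\infty}$ norm yields the desired rate. The main obstacle I anticipate is precisely this face bookkeeping: securing the correct $h_F^{1/2}$ scaling via a continuous rather than discrete trace inequality, and handling faces lying on the interface between two subdomains $\Omega_j$. The mesh-partition compatibility resolves both issues, after which inserting the three $O(h^k)$ estimates into \eqref{errorE} concludes the proof.
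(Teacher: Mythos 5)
Your proposal is correct and follows essentially the same route as the paper: bounding the three terms of \eqref{errorE} via \eqref{XihStab} and \eqref{interpEs} for $\mathcal{I}_{\kp}^h$, elementwise $\L^2$-projection estimates for $\mathcal{I}_{\br}^h$, and the commuting property \eqref{div0} plus a scaled trace inequality for the face term, which is exactly what the paper's ``trace theorem and standard scaling arguments'' amounts to. Your explicit handling of interface faces and of the $h_F^{1/2}$ scaling merely spells out details the paper leaves implicit.
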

\begin{proof}
We first notice that by virtue of \eqref{div0}
\[
\tfrac{1}{\rho} \bdiv \jmath_\omega^+ {\mathcal I}_{\kp}^h   = \tfrac{1}{\rho} \left(\bdiv \jmath_\omega^+(\kp) - U_h  \bdiv\jmath_\omega^+\kp\right),
\]
and hence 
\begin{align*}
  \norm{h_\cF^{1/2} \mean{\tfrac{1}{\rho} \bdiv \jmath_\omega^+ {\mathcal I}_{\kp}^h}}^2_{0,\cF^0_h}  
  \lesssim    
   \sum_{K\in \cT_h} \sum_{F\in \cF(K)} h_F\norm{ \bdiv \jmath_\omega^+\kp - U_K \bdiv \jmath_\omega^+\kp) }^2_{0,F},
\end{align*}
where $U_K:= U_h|_K$ is the $\bL^2(K)$-orthogonal projection  onto $[\cP_{k-1}(K)]^d$. Under the regularity hypotheses on $\bdiv \jmath_\omega^+\kp$, the trace theorem and standard scaling arguments give 
\[
 h_F^{1/2}\norm{ \bdiv \jmath_\omega^+\kp - U_K \bdiv \jmath_\omega^+\kp) }_{0,F}  
 \lesssim h_K^k \norm{\bdiv \jmath_\omega^+\kp}_{k,K}, \quad \forall F\in \cF(K),
\]
which implies that 
\begin{equation}\label{bo1}
	\norm{ h_\cF^{1/2} \mean{\tfrac{1}{\rho} \bdiv \jmath_\omega^+ {\mathfrak I}_{\kp}^h}}_{W^{1,\infty}(\bL^2(\cF^0_h))} \lesssim h^k \sum_{j=1}^J \max_{[0, T]}\norm{\bdiv \jmath_\omega^+\kp}_{W^{1,\infty}(\bH^k(\Omega_j))}.
\end{equation}
Moreover, it is straightforward that 
\begin{equation}\label{asymQ}
 \norm{\bs- Q_h \bs}_{0,\Omega} \lesssim  h^k \sum_{j=1}^J \norm{\bs}_{k,\Omega_j}
 \qquad\forall\bs\in \prod _{j=1}^J\mathbb{H}^{k}(\Omega_j)\cap\mathbb Q.
\end{equation}
Hence, as a consequence of \eqref{XihStab} and \eqref{interpEs}, it holds
	\begin{align}\label{bo2}
	\begin{split}
		&\norm{{\mathcal I}_{\kp}^h}_{\W^{2,\infty}(\mathfrak{S})} + \norm{{\mathcal I}_{\br}^h}_{\W^{1,\infty}(\mathbb L^2(\Omega))} \leq  \norm{\kp - \boldsymbol{\Pi}_h \kp}_{\W^{2,\infty}(\mathfrak{S})} + \norm{\br - Q_h\br}_{\W^{1,\infty}(\mathbb L^2(\Omega))}
		\\[1ex]
		&\qquad \lesssim 
	h^k \sum_{j=1}^J\left\{  \norm{\kp}_{W^{1,\infty}([\mathbb  H^k(\Omega_j)]^2)} + \norm{\bdiv \jmath_\omega^+ \kp}_{W^{1,\infty}(\bH^k(\Omega_j))} + \norm{\br}_{\W^{1,\infty}(\mathbb H^k(\Omega_j))}\right\}.
	\end{split}
	\end{align}
	
	The asymptotic error estimate \eqref{asympSD} is now a direct consequence of \eqref{errorE}, \eqref{bo1}, and \eqref{bo2}.
\end{proof}

\section{Full discretization schemes}

We notice that the condition $\jmath_\omega^+\kq_h \in \mathbb H(\bdiv, \Omega)$ that applies to functions $\kq_h\in \mathfrak S_h$  translates into the continuity of the normal component of $\jmath_\omega^+\kq_h$ across all the internal facets $F\in \cF_h^0$. This restriction renders difficult the construction of an explicit basis of  $\mathfrak S_h$. Fortunately, the CG scheme \eqref{varFormR1-varFormR2-hc} can still be  efficiently implemented by hybridization. Indeed, if we let $\Theta_h := \bigoplus_{F\in \cF_h^0} [\cP_k(F)]^d$, then $\mathfrak S_h$ can be alternatively defined by  
\[
\mathfrak S_h = \big\{\kq \in \mathfrak S_h^{DG};\ \big(\boldsymbol{\phi}, \jump{\jmath_\omega^+\kq} \big)_{\cF_h} = 0,\quad \forall \boldsymbol{\phi}\in \Theta_h \big\}.
\]
Hence, we can relax in \eqref{varFormR1-varFormR2-hc} the continuity constraint on $\jmath_\omega^+ \kp_h(t)$ at the internal faces of the triangulation by introducing a Lagrange multiplier represented by an auxiliary trace variable $\boldsymbol{\psi}_h(t)\in \Theta_h$.  This leads to a hybrid mixed version of \eqref{varFormR1-varFormR2-hc} in which we look for $\kp_h\in \cC^1(\mathfrak S_h^{DG})$, $\br_h\in \cC^0(\mathbb Q_h)$, and $\boldsymbol{\psi}_h\in \cC^0(\Theta_h)$ satisfying 
\begin{align}\label{hybrid}
\begin{split}
\dfrac{\text{d}}{\text{d}t}  \Big\{ A\big( \jmath_\omega\dot\kp_h + \pi_2\kp_h , \jmath_\omega\kq\big)  + (\br_h,\jmath_\omega^+\kq)  - \big(\boldsymbol{\psi}_h,  \jump{\jmath_\omega^+\kq_h} \big)_{\cF^0_h}\Big\}
+ \big( \bdiv_h\jmath_\omega^+\kp_h + \bF,  \bdiv_h \jmath_\omega^+ \kq \big)_\rho & =   (\ddot{\boldsymbol{g}}, \jmath_\omega^+\kq\bn)_{\cF_h^\partial}, 
\\[1ex]
 (\bs, \jmath_\omega^+\kp_h) - \big(\boldsymbol{\phi}, \jump{\jmath_\omega^+\kp_h} \big)_{\cF^0_h}  &=  0, 
 \end{split}
 \end{align}
 for all $\kq\in \mathfrak{S}_h$ and $(\bs, \boldsymbol{\phi} )\in \mathbb{Q}_h\times \Theta_h$, and  such that the following initial conditions are satisfied:
\begin{equation*}
\kp_h(0)= \varXi_h\kp_0,\quad  \dot\kp_h(0) = \varXi_h\kp_1, \quad \br_h(0) = \mathbf 0, \quad \text{and} \quad  \boldsymbol{\psi}_h(0) = \mathbf 0.
\end{equation*}

We point out that the derivation of problem \eqref{hybrid} results from the same procedure used in Section~\ref{section3} to obtain \eqref{var0}. Indeed, it follows from \eqref{const+} that 
\begin{equation}\label{const+DG}
A\big( \jmath_\omega\dot\kp + \pi_2\kp , \jmath_\omega\kq\big)  = (\beps(\ddot{\bu}), \jmath_\omega^+ \kq)  = \big(\nabla \ddot\bu - \dot\br, \jmath_\omega^+ \kq\big),
\quad \forall \kq = (\bet, \btau)\in \mathfrak S_h^{DG}
\end{equation}
Performing now an integration by parts on each element $K$ and using that $\ddot \bu = \rho^{-1}( \bF + \bdiv \jmath_\omega^+ \kp)$ we obtain 
\begin{align*}\label{const+DG}
A\big( \jmath_\omega\dot\kp + \pi_2\kp , \jmath_\omega\kq\big)  &= (\beps(\ddot{\bu}), \jmath_\omega^+ \kq)  = -\big( \ddot\bu, \bdiv_h \jmath_\omega^+ \kq \big) + \big( \ddot \bu, \jump{\jmath_\omega^+ \kq} \big)_{\cF_h }
- \big( \dot\br, \jmath_\omega^+ \kq\big)
\\[1ex]
& = - \big( \bdiv\jmath_\omega^+\kp + \bF,  \bdiv_h \jmath_\omega^+ \kq \big)_\rho  + \big( \ddot \bu, \jump{\jmath_\omega^+ \kq} \big)_{\cF^0_h } + (\ddot{\boldsymbol{g}}, \jmath_\omega^+\kq\bn)_{\cF_h^\partial}- \big( \dot\br, \jmath_\omega^+ \kq\big), 
\end{align*}
for all $ \kq = (\bet, \btau)\in \mathfrak S_h^{DG}$. This reveals that the discrete function $\boldsymbol{\psi}_h(t)$ approximates the traces of  $\dot \bu(t) - \bu_1$ on the internal faces of the mesh. It is important to realize that the component $(\kp_h(t), \br_h(t))$ of the  solution to the hybrid problem \eqref{hybrid} coincides with the solution of the non-hybridized version \eqref{varFormR1-varFormR2-hc} of the problem. Indeed, the second equation of \eqref{hybrid} implies that $\kp_h(t)\in \mathfrak S_h$ and testing the first one with $\kq_h\in \mathfrak S_h$ we recover back  \eqref{varFormR1-varFormR2-hc}. 

We aim now to propose numerical time integration methods for the semi-discrete problems \eqref{hybrid} and \eqref{varFormR1-varFormR2-h}. To describe the form that these fully discrete schemes take we need to introduce  few notations. Given $L\in \mathbb{N}$, we consider a uniform partition of the time interval $[0, T]$ with step size $\Delta t := T/L$. Then, for any continuous function $\phi:[0, T]\to \R$ and for each $k\in\{0,1,\ldots,L\}$, we denote $\phi^k := \phi(t_k)$, where $t_k := k\,\Delta t$. In addition, we adopt the same notation for vector/tensor valued functions and consider $t_{k+\frac{1}{2}}:= \frac{t_{k+1} + t_k}{2}$, $\phi^{k+\frac{1}{2}}:= \frac{\phi^{k+1} + \phi^k}{2}$, $\phi^{k-\frac{1}{2}}:= \frac{\phi^{k} + \phi^{k-1}}{2}$, and $\widehat \phi^k := \frac{\phi^{k+\frac{1}{2}} + \phi^{k-\frac{1}{2}}}{2}= \frac{\phi^{k+1} + 2\phi^k + \phi^{k-1}}{4}$. We also introduce the discrete time derivatives
\[
\partial_t \phi^k := \frac{\phi^{k+1} - \phi^k}{\Delta t}, \quad \bar \partial_t \phi^k :=  \frac{\phi^{k} - \phi^{k-1}}{\Delta t}\quad\text{and} \quad \partial^0_t \phi^k := \frac{\phi^{k+1} - \phi^{k-1}}{2\Delta t}\,,
\]
from which we notice that $\partial_t \bar \partial_t \phi^k  = \frac{\phi^{k+1} -2\phi^{k} + \phi^{k-1}}{\Delta t^2}$.

\paragraph{Time-stepping scheme for the semi-discrete CG problem \eqref{hybrid}.}

We point out that large variations in the material parameters may require restrictive CFL conditions when explicit time-stepping schemes are used. With the purpose of ensuring  robustness, we opt for an implicit time integration method for \eqref{hybrid}. With this regard, Newmark's family of methods is one of the most widely used algorithms in structural dynamics (cf. \cite{hughes}). We will carry out the temporal discretization of our semi-discretized problems by applying the variante of Newmark's methods called the trapezoidal rule (also known in engineering as the average acceleration method). It is a second-order accurate and unconditionally stable time-integration method. In our case it reads as follows: For each $k=1,\ldots,L-1$, we look for $\kp_h^{k+1}\in \mathfrak S_h^{DG}$, $\br^{k+1}_h\in \mathbb{Q}_h$, and $\boldsymbol{\psi}^{k+1}_h\in \Theta_h$ such that
\begin{align}\label{fullyDiscretePb1-Pb2c}
\begin{split}
A\big( \jmath_{\omega}\partial_t\bar \partial_t\kp^k_h + \pi_2 \partial^0_t\kp^k_h , \jmath_{\omega}\kq\big)  &+ (\partial^0_t\br^k_h,\jmath_{\omega}^+\kq) - \big(\partial^0_t\boldsymbol{\psi}_h^k,  \jump{\jmath_\omega^+\kq} \big)_{\cF^0_h}+ 
\big( \bdiv_h\jmath_{\omega}^+ \widehat{\kp}^k_h,  \bdiv_h \jmath_{\omega}^+\kq \big)_\rho
\\[1ex]
  &= - \big( \bF(t_k),  \bdiv_h \jmath_{\omega}^+\kq \big)_\rho + \big<\ddot{\boldsymbol{g}}, \jmath_\omega^+\kq\bn \big>_\Gamma, \quad \forall \kq\in \mathfrak{S}_h^{DG}
 \\[1ex]
 &(\bs, \jmath^+_{\omega}\kp^{k+1}_h) + (\boldsymbol{\phi}, \jump{\jmath_\omega^+ \kp_h^{k+1}})_{\cF^0_h} = 0, \quad \forall (\bs,\boldsymbol{\phi}) \in \mathbb{Q}_h\times \Theta_h,
\end{split}
\end{align}
with the initial conditions
\begin{equation}\label{intconhyb}
	\kp_h^0= \varXi_h\kp_0,\quad \br_h^0 = \mathbf 0,\quad \boldsymbol{\psi}^0_h = \mathbf 0,
\quad
\text{and}
\quad
\kp_h^1= \varXi_h\kp_0 + \Delta t \, \varXi_h\kp_1 + \frac{\Delta t^2}{2} \varXi_h\ddot\kp(0)
\end{equation}
where the components of $\ddot\kp(0) = (\ddot\bgam(0),  \ddot\bze(0)) = \big( \cC\beps(\ddot\bu(0)),\,  \tilde\omega^{-1} \big( \cD \beps(\ddot\bu(0)) - \ddot\bgam(0) - \bze_1\big) \big)$ are deduced from
\[
\ddot\bu(0) = \rho^{-1}\big(\bF(0) + \bdiv\bsig_0 \big).
\] 

It is essential to notice that, the spaces $\mathfrak S_h^{DG}$ and $\mathbb Q_h$ have no interelement continuity requirements. Hence, at each $k=1,\ldots, L-1$, problem \eqref{fullyDiscretePb1-Pb2c} can be reduced by static condensation to a linear system that only involves the coefficients $\boldsymbol{\psi}^{k+1}_h$ as unknowns. The remaining variables $\kp_h^{k+1}$ and $\br_h^{k+1}$ can then be reconstructed after solving in $\boldsymbol{\psi}^{k+1}_h$ by performing computationally cheap element-wise operations.

\paragraph{Time-stepping scheme for the semi-discrete DG problem \eqref{varFormR1-varFormR2-h}.}

The DG method \eqref{varFormR1-varFormR2-h} is not hybridizable. Hence, an implicit time discretization method would not benefit in this case from a drastic size reduction due to static condensation, and it would generate a rather prohibitive computational cost. For this reason,  we propose here for \eqref{varFormR1-varFormR2-h} the following second order accurate explicit centered finite difference scheme: 

For each $k=1,\ldots,L-1$, we look for $\kp_h^{k+1}\in \mathfrak S_h^{DG}$ and $\br^{k+1}_h\in \mathbb{Q}_h$ such that
\begin{align}\label{fullyDiscretePb1-Pb2}
\begin{split}
A\big( \jmath_{\omega}\partial_t\bar \partial_t\kp^k_h &+ \pi_2 \partial^0_t\kp^k_h , \jmath_{\omega}\kq\big)  + (\partial^0_t\br^k_h,\jmath_{\omega}^+\kq) + 
\big( \bdiv_h\jmath_{\omega}^+ \kp^k_h,  \bdiv_h \jmath_{\omega}^+\kq \big)_\rho
\\[1ex]
&- \big(\mean{\tfrac{1}{\rho} \bdiv_h \jmath_\omega^+ \kp^k_h}, \jump{\jmath_\omega^+ \kq}\big)_{\cF^0_h}
- \big(\mean{\tfrac{1}{\rho} \bdiv_h \jmath_\omega^+ \kq}, \jump{\jmath_\omega^+ \kp^k_h}\big)_{\cF^0_h}+ \big(\texttt{a} h_\cF^{-1}\jump{\jmath_\omega^+ \kp^k_h}, \jump{\jmath_\omega^+ \kq} \big)_{\cF^0_h}
\\[1ex]
&
 = -\big(\bF(t_k),  \bdiv_h \jmath_{\omega}^+\kq \big)_\rho + \big(\mean{\tfrac{1}{\rho} \bF(t_k)}, \jump{\jmath_\omega^+ \kq}\big)_{\cF^0_h} + (\ddot{\boldsymbol{g}}, \jmath_\omega^+\kq\bn)_{\cF_h^\partial}, \quad \forall \kq\in \mathfrak{S}_h
 \\[1ex]
 & (\bs, \jmath^+_{\omega}\kp^{k+1}_h) = 0, \quad \forall \bs\in \mathbb{Q}_h,
\end{split}
\end{align}
where the initial values $\kp_h^0$, $\br_h^0$, and $\kp_h^1$ are given as in \eqref{intconhyb}. 

We point out that we can use here standard shape functions for $\cP_k(K)$ to expand the elements of $\mathfrak S_h^{DG}$. Actually, as the spaces $\mathfrak S_h^{DG}$ and $\mathbb Q_h$ are free from any interelement connexion, a judicious choice of locally orthogonal basis functions (see \cite{hesthaven}) renders the mass matrix of \eqref{fullyDiscretePb1-Pb2} diagonal and the corresponding time marching becomes then  fully explicit.

\end{document}